\documentclass{amsart}
\usepackage[final]{hyperref}
\usepackage{geometry}
\geometry{text={15cm, 22cm}}

\title{On a Differential Model for Sandpiles 
Growing in a Silo} % Paper title
\subjclass{35C15, 35F30} % UDC number
\keywords{granular matter, asymptotic profile, mass transport}

\author{Graziano Crasta}
\address{Dipartimento di Matematica ``G.\ Castelnuovo'', Sapienza Università di Roma\\
	P.le A.\ Moro 5, I-00185 Roma (Italy)}
\email{graziano.crasta@uniroma1.it}
\author{Annalisa Malusa}
\address{Dipartimento di Matematica ``G.\ Castelnuovo'', Sapienza Università di Roma\\
	P.le A.\ Moro 5, I-00185 Roma (Italy)}
\email{annalisa.malusa@uniroma1.it}

\theoremstyle{plain}
\newtheorem{theorem}{Theorem}
\newtheorem{lemma}{Lemma}
\newtheorem{proposition}{Proposition}

\newtheorem{conjecture}{Conjecture}
\theoremstyle{definition}
\newtheorem{definition}{Definition}
\newtheorem{remark}{Remark}
\newtheorem{example}{Example}

\numberwithin{equation}{section}
\numberwithin{theorem}{section}
\numberwithin{lemma}{section}
\numberwithin{proposition}{section}
\numberwithin{corollary}{section}
\numberwithin{conjecture}{section}
\numberwithin{definition}{section}
\numberwithin{remark}{section}
\numberwithin{example}{section}

\def\R{\mathbb{R}}
\newcommand{\Xp}[1][\phi]{\mathbb{X}_{#1}}
\def\Xf{\Xp[f]}
\def\meas{\mathcal{M}}

\def\cray#1{{[\![#1]\!]}}
\DeclareMathOperator{\dive}{div}
\DeclareMathOperator{\spt}{supp}
\DeclareMathOperator{\diam}{diam}
\def\uhl{u_\phi}
\def\ul{u_\infty}
\def\uu{\underline{u}}
\def\fr{\widetilde{f}}

\DeclareMathOperator{\lip}{Lip}
\def\lipu{\lip_1}

\begin{document}
\begin{abstract}
We discuss some features of a boundary value problem for a system of PDEs that describes the growth of a sandpile in a container under the action of a vertical source. In particular, we characterize the long--term behavior of the profiles, and we provide a sufficient condition on the vertical source that guarantees the convergence to the equilibrium in a finite time. We show by counterexamples that a stable configuration may not be reached in a finite time, in general, even if the source is time-independent. 
Finally, we provide a complete characterization of the equilibrium profiles.
\end{abstract}
\maketitle
\tableofcontents

\section{Introduction}

Since the pioneering paper \cite{Pr}, the variational approach to the study of growing sandpiles 
has become established as an effective way to describe the macroscopic behavior of granular materials. 
In these models, the complex dynamics of granular flow is simplified  by dividing  the material into a static lower layer (standing layer), which contains most of the pile, and a flowing, dynamic upper layer (rolling layer). 
This approach is particularly effective for simulating the evolution of a sandpile as new material is added 
(see \cite{CCS,DPJ}) and its equilibrium shape (see \cite{CCCG,CFV,CM6,CM7,CM9,CM8,CM10,CM11}). 

We are interested in the evolution of a sandpile growing in a bounded container under the action of a vertical source. 
The container has a flat base $\Omega\subset\R^2$ and vertical walls, whose height is specified by a function 
$\phi \colon \partial \Omega \to [0,+\infty[$. 
The vertical source, which is assumed time--independent, is modeled here by a function $f\colon \Omega \to [0,+\infty[$. 
At each time $t\geq 0$,
the shape of the sandpile (that is, the profile of the standing layer)
is described by the graph of a function $u(t, \cdot)$, with $u\colon \R^+\times \Omega \to \R$. 
We denote by $u_0$ the initial profile of the pile. 
A key feature of the granular matter is the existence of a critical slope which cannot be exceeded by the standing layer. 
In the following we normalize the critical slope to $1$, and hence we impose the constraint $|\nabla u|\leq 1$
on the spatial gradient of $u$. 
The thickness of the rolling layer is given by $v\colon \R^+\times \Omega \to [0,+\infty[$, and it is assumed that  the material emitted from the source rolls downhill only if it falls on points with a critical slope, that is 
$(1-|\nabla u|)v=0$ in $\R^+ \times \Omega$. 
When the profile reaches the top of the wall (that is, at those points of $\partial \Omega$ where $u=\phi$), 
the sand coming from the rolling layer falls. 
We therefore introduce a third variable in our problem: 
a non-negative measure $\nu$ supported on $\partial \Omega$ and describing the amounts of sand discharged at each point. 
Assuming that the material rolls along the directions of steepest descent, the mass conservation law can be written as
$
\partial_t u - \dive(v\, \nabla u) = f - \nu.
$

In summary, given a silo $(\Omega,\phi)$ and a vertical source $f$,  
the dynamics of the corresponding growing sandpile is described by a triplet $(u,v,\nu)$ satisfying 
the following system of PDEs with constraints: 
\begin{equation}
	\label{f:Pintro}
	\begin{cases}
		\partial_t u - \dive(v\, \nabla u) = f - \nu
		&\text{in}\ \R^+\times\R^N,
		\\
		|\nabla u|\leq 1,\ v\geq 0, \ (1-|\nabla u|) v = 0,
		&\text{in}\ \R^+\times\Omega,
		\\
		0\leq u(t, x) \leq \phi(x)
		&\forall t\geq 0,\ x\in\partial\Omega,
		\\
		u(t,x) = \phi(x)
		& \text{$\nu$-a.e.\  $(t,x)\in\R^+\times\partial\Omega$},
		\\
		u(0, \cdot) = u_0.
	\end{cases}
\end{equation}
Since the analysis can be done in any space dimension $N\geq 1$, 
in the above problem and in the following we will assume that $\Omega$ is an open bounded convex subset
of $\R^N$.
Moreover, to simplify the exposition, we will assume that $u_0 = 0$, i.e., we start the evolution with an empty silo.
Yet, we will consider a possibly non-vanishing initial data in the analysis of 
a related variational inequality for the $u$-component (see Proposition~\ref{p:comp}),
which in turn will be useful for the study of stationary solutions of~\eqref{f:P},
i.e., the solutions $(u,v,\nu)$ 
of
\begin{equation*}
	\begin{cases}
		- \dive(v\, \nabla u) = f - \nu
		&\text{in}\ \R^N,
		\\
		|\nabla u| \leq 1,\ v\geq 0, \ (1-|\nabla u|) v = 0,
		&\text{a.e.\ in}\ \Omega,
		\\
		0\leq u \leq \phi
		&\text{in}\ \partial\Omega,
		\\
		u(x) = \phi(x)
		& \text{$\nu$-a.e.\  $x\in\partial\Omega$}
	\end{cases}
\end{equation*}
(see Section~\ref{s:tre}).
The above problem can be formulated also without any explicit reference to the boundary measure $\nu$,
but referring only to its support $\Gamma_f$ that can be explicitly constructed (see \eqref{f:Gf}).

\bigskip
The plan of the paper is the following.

In Section \ref{s:uno} we state the hypotheses on $\Omega$, $\phi$, and $f$ that guarantee the existence of a solution of (the $N$--dimensional weak formulation of) \eqref{f:Pintro}, as proved in \cite{DPJ}. 

Section \ref{s:due} is focused on the asymptotic behavior of the shape of the pile. 
We show that the solution $u(t,\cdot)$ converges, as $t\to +\infty$ to a limit $u_\infty$ (see Theorem \ref{t:conv}).
After the explicit computations of Examples~\ref{e:conv} and~\ref{e:conv2},
we discuss conditions for its convergence in finite time,
by formulating a conjecture and proving a result in this direction (see Conjecture~\ref{conj} and Theorem \ref{t:fintime}).

In Section \ref{s:tre} we show that $u_\infty$ is the $u$ component of a stationary solution of~\eqref{f:Pintro} 
(see Theorem~\ref{t:cm11}). This result is essentially based on a careful analysis of the stationary problem developed in \cite{CM11}.

\subsection*{Notations} \phantom{xx}

\noindent The Euclidean norm of $\xi \in \R^N$ will be denoted by $|\xi|$.

\noindent For $E\subseteq \R^N$, $\chi_E$ denotes the characteristic function of $E$, that is
\[
\chi_E(x)=
\begin{cases}
1, & \text{if}\ x\in E, \\
0 & \text{if}\ x\not\in E. 
\end{cases}
\]

\noindent For any $E\subset \R^N$, we denote by $\meas(E)$  the set of bounded Borel measures supported on $E$, and by $\meas^+(E)$ the set of non-negative measures in $\meas(E)$.

\noindent For $\mu\in L^\infty(0,T;\meas(E))$, we set $\mu_t=\mu(t,\cdot)$.

\noindent 
Given a function $u = u(t,x)$, $\partial_t u$ and $\nabla u$ denote respectively  the time derivative and the spatial part of the gradient.

\noindent For any open set $A$, $C^\infty_c(A)$ denotes the set of smooth functions with compact support in $A$, and $\mathcal{D}'(A)$ its topological dual, that is, the set of distributions on $A$.

\noindent $\lipu(A)$ is the set of Lipschitz functions in $\overline{A}$ with Lipschitz constant $1$, that is,
\[
\lipu(A)=\left\{ u\colon \overline{A} \to \R\colon u(x) - u(y) \leq |x-y|,\ \forall x,y\in\overline{A} \right\}.
\]

\noindent $L^1_+(A)$ is the set of non-negative functions in $L^1(A)$.

\noindent For $f \in L^1_+(A)$, $\spt(f)\subseteq\overline{A}$ denotes the
essential support of $f$ as a function extended in
$\R^N$ by setting $f=0$ on $\R^N\setminus A$, that is the complement in $\R^N$ of the largest open set where $f=0$ almost everywhere.

\section{The evolutionary problem} \label{s:uno}
In what follows we fix an integer $N\geq 1$, and
\begin{itemize}
	\item[(D1)] a non-empty open convex bounded set $\Omega\subset\R^N$;
	\item[(D2)] a lower semicontinuous function $\phi\colon\partial\Omega\to [0, +\infty[$;
	\item[(D3)] a non-negative integrable function $f \in L^1_+(\Omega)$.
\end{itemize}
We introduce the convex set of admissible profiles
\begin{equation}
	\label{f:Xp}
	\Xp :=
	\left\{
	u \in \lipu(\Omega)\colon
u\geq 0\ \text{in}\ \overline{\Omega},\
	u\leq \phi\ \text{on}\ \partial\Omega
	\right\},
\end{equation}
and we consider the evolutionary problem \eqref{f:Pintro} 
with the previous data and initial profile $u_0 = 0$.
More precisely,
we say that $(u,v,\nu)$ is a solution to the system
\begin{equation}
	\label{f:P}
	\begin{cases}
		\partial_t u - \dive(v\, \nabla u) = f - \nu
		&\text{in}\ \R^+\times\R^N,
		\\
		|\nabla u|\leq 1,\ v\geq 0, \ (1-|\nabla u|) v = 0,
		&\text{in}\ \R^+\times\Omega,
		\\
		0\leq u\leq \phi
		&\text{in}\ \R^+\times\partial\Omega,
		\\
		u = \phi
		& \text{$\nu$-a.e.\ in $\R^+\times\partial\Omega$},
		\\
		u(0, \cdot) = 0,
	\end{cases}
\end{equation}
if, for every $T>0$,
\begin{itemize}
	\item[(S1)] $u\in L^\infty(0,T; W^{1,\infty}(\Omega))$, $\partial_t u \in L^2( ]0,T[\times \Omega)$,
	$u(t,\cdot)\in\Xp$ for a.e.\ $t\in [0,T]$;
	%, and $u(0, \cdot) = 0$ in $\Omega$;
	\item[(S2)] $v\in L^\infty(0,T; L^1_+(\Omega))$, $\nu\in L^\infty(0,T; \meas^+(\partial \Omega))$;
	\item[(S3)] $(1-|\nabla u(t,x)|)\, v(t,x)=0$ for $\mathcal{L}^{N+1}$--a.e.\ $(t,x)\in ]0,T[ \times \Omega$;
	%\item[(S4)] $\nu\in L^\infty(0,T; \meas^+(\partial \Omega))$;
	\item[(S4)] $u(0, \cdot) = 0$ in $\Omega$;
	\item[(S5)] $u(t,x)=\phi(x)$ $\nu_t$--a.e.\ on $\partial\Omega$, for  a.e.\ $t\in ]0,T[$;
	\item[(S6)] for every test function $\varphi\in C^\infty_c(\R^N)$, it holds that
	\[
	\frac{d}{dt}\int_{\Omega} u(t, x) \varphi(x) \, dx
	+ \int_{\Omega} v(t,x) \nabla u(t,x) \cdot \nabla\varphi(x)\, dx
	= \int_\Omega f(x)\varphi(x)\, dx
	- \int_{\partial\Omega} \varphi(x)\, d\nu_t(x),
	\quad
	\text{in}\ \mathcal{D}'(0,T).
	\]
\end{itemize}

\begin{remark}
\label{r:reg}
We remark that every function $u\in L^\infty(0,T; W^{1,\infty}(\Omega))$ belongs also to
$C([0,T]; L^2(\Omega))$ (see, \cite[Theorem 7.104]{Sal}), so that the initial condition $u(0,\cdot) = 0$ (or even $u(0,\cdot) = u_0\in\Xp$) makes sense.
Moreover, this implies also that the condition $u(t,\cdot) \in \Xp$ in (S1) holds for every $t\in [0,T]$.
\end{remark}

A crucial role in the description of the solutions to problem \eqref{f:P} is played by the Lax--Hopf function associated with the boundary datum $\phi$:
\begin{equation*}
	%\label{f:uhl}
	\uhl(x) := \min\{\phi(y) + |x-y|\colon y\in\partial\Omega\}, \qquad x\in\overline{\Omega}.
\end{equation*}
We recall that $\uhl$ is a Lipschitz function in $\overline{\Omega}$,
$|\nabla\uhl| = 1$ a.e.\ in $\Omega$,
and it is the maximal function in the set $\Xp$ defined in \eqref{f:Xp}, that is
\begin{equation}\label{f:maxX}
u \leq \uhl \ \text{in}\ \overline{\Omega}, \qquad \forall u\in\Xp.
\end{equation}

\begin{remark}\label{r:dist}
If $\phi=0$ (the open table problem in the variational models for growing sandpiles), then the Lax--Hopf function
is the distance function from the boundary of $\Omega$.
\end{remark}

For $x\in \Omega$ we introduce the set $\Pi(x)$ of all projection of $x$ on $\partial \Omega$, that is
\begin{equation*}
	\Pi(x):= \{y\in\partial \Omega\colon \uhl(x)=\phi(y) + |x-y|\},
\end{equation*}
and the discharge boundary
\begin{equation}
	\label{f:Gf}
	\Gamma_f := \{
	y\in\partial\Omega\colon \exists x\in\spt(f)
	\ \text{such that} \ y\in\Pi(x)
	\} = \bigcup_{x\in\spt(f)} \Pi(x).
\end{equation}
Since $\phi$ is a lower semicontinuous function, and $\spt(f)$ is compact, it is readily seen that $\Gamma_f$ is closed.

\smallskip

The following existence result for problem \eqref{f:P} and the properties of the solutions needed in the rest of the paper have been proved in \cite[Theorem~6.5]{DPJ} (see also \cite{Pr}).

\begin{theorem}
\label{t:exidp}
	Under the assumptions (D1)--(D3) there exists a solution $(u,v,\nu)$ of \eqref{f:P}. Moreover
	\begin{itemize}
		\item[(i)] the $u$ component of the solution is unique and $t \mapsto u(t,\cdot)$ is a non-decreasing function in $\R^+$;
		\item[(ii)] the measure $\nu_t$ is supported on $\Gamma_f$ for a.e. $t \geq 0$, and to every $\nu$ corresponds a unique $v$.
	\end{itemize}
\end{theorem}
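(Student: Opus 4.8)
The plan is to obtain the $u$-component as the solution of an evolution variational inequality and then to recover $v$ and $\nu$ as the Lagrange multipliers associated with the two active constraints in \eqref{f:P}, namely the gradient bound inside $\Omega$ and the obstacle $\phi$ on $\partial\Omega$. First I would regard the set of admissible profiles $\Xp$ from \eqref{f:Xp} as a closed convex subset of $L^2(\Omega)$ and consider the autonomous gradient flow
\[
\partial_t u(t)\in f-\partial I_{\Xp}\bigl(u(t)\bigr)\quad\text{in }L^2(\Omega),\qquad u(0)=0,
\]
where $I_{\Xp}$ is the indicator function of $\Xp$ and $\partial I_{\Xp}$ its maximal monotone subdifferential. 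Since $0\in\Xp$, the Br\'ezis theory of maximal monotone operators in Hilbert spaces yields a unique $u\in C([0,T];L^2(\Omega))$ with $u(t,\cdot)\in\Xp$ for every $t$ and the time regularity required in (S1), the general datum $f\in L^1_+(\Omega)$ being recovered by approximation together with the a priori bounds $0\le u(t,\cdot)\le\uhl$ (which follows from \eqref{f:maxX}) and $\partial_t u\ge 0$. This already gives the uniqueness of $u$; moreover, since $\Xp$ is a sublattice of $L^2(\Omega)$ the associated semigroup is order-preserving, so from $u(h,\cdot)\ge 0=u(0,\cdot)$ (forced by $f\ge 0$) and the autonomy of the flow we obtain $u(t+h,\cdot)\ge u(t,\cdot)$, which is the monotonicity in (i). Together with Remark~\ref{r:reg} this establishes (S1) and (S4).

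Next I would recover $v$ and $\nu$ from the differential inclusion. For a.e.\ $t$ set $\xi(t):=f-\partial_t u(t)$, so that $\xi(t)\in\partial I_{\Xp}(u(t))$, i.e.\ $\langle\xi(t),u(t)-w\rangle\ge 0$ for every $w\in\Xp$. Testing with competitors $w=u(t)\pm\psi$, $\psi\in C^\infty_c(\Omega)$, with $|\nabla w|\le 1$, shows that away from the obstacle $\xi(t)$ is minus a divergence; the classical convex duality for the gradient constraint --- equivalently, the solvability of the Monge--Kantorovich equation with source $\xi(t)$ --- then produces a transport density $v(t,\cdot)\in L^1_+(\Omega)$ with
\[
-\dive\bigl(v(t,\cdot)\,\nabla u(t,\cdot)\bigr)=\xi(t)-\nu_t,\qquad \bigl(1-|\nabla u(t,\cdot)|\bigr)\,v(t,\cdot)=0 \ \text{ a.e.\ in }\Omega,
\]
the remainder $\nu_t\in\meas^+(\partial\Omega)$ being forced to be supported on the contact set $\{x\in\partial\Omega:u(t,x)=\phi(x)\}$ by testing against functions that differ from $u(t)$ only in a neighbourhood of $\partial\Omega$. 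The elementary bound $\|v(t,\cdot)\|_{L^1(\Omega)}\le\diam(\Omega)\,\|f\|_{L^1(\Omega)}$ on the transport density --- valid because $\partial_t u\ge 0$, so that the effective source is dominated by $f$ --- gives $v\in L^\infty(0,T;L^1_+(\Omega))$, and likewise $\nu_t(\partial\Omega)\le\|f\|_{L^1(\Omega)}$ gives $\nu\in L^\infty(0,T;\meas^+(\partial\Omega))$; a measurable selection in $t$ turns these into genuine (Bochner-)measurable maps, and (S6), together with (S3) and (S5), is then merely a restatement of the two displayed identities.

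It remains to localize $\nu_t$ and to prove the uniqueness of $v$. For the support, on the transport set one has $\nabla u(t,\cdot)=\nabla\uhl$, whose integral curves are the projection segments joining a point $x$ to a point of $\Pi(x)$; since the effective source $\xi(t)$ is dominated by $f$ --- it equals $f$ minus the non-negative function $\partial_t u$, which is absorbed underneath the pile --- the density $v(t,\cdot)$ is concentrated on rays issued from $\spt(f)$, and $\nu_t$, being the outgoing normal flux of $v\,\nabla u$ on $\partial\Omega$, therefore concentrates at the endpoints of such rays, that is on $\bigcup_{x\in\spt(f)}\Pi(x)=\Gamma_f$ by \eqref{f:Gf}. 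Finally, given $u$ and $\nu$, the vector field $v\,\nabla u$ solves a Monge--Kantorovich system whose datum $f-\nu$ has absolutely continuous part $f$, and the uniqueness of the transport density for such problems (applicable since $f\in L^1$) yields the uniqueness of $v$ claimed in (ii).

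The crux is the second step: converting the abstract membership $\xi(t)\in\partial I_{\Xp}(u(t))$ into a genuine PDE with an $L^1$ density $v(t,\cdot)$ satisfying the complementarity relation $(1-|\nabla u|)\,v=0$, doing so measurably in $t$ and with a uniform $L^\infty(0,T;L^1)$ bound. This is precisely the regularity theory for the Lagrange multiplier of the gradient constraint --- i.e.\ the existence of transport densities --- and it is where the convexity of $\Omega$ and the integrability of $f$ are used; the simultaneous extraction of the boundary part $\nu_t$ and the verification that it lives on the contact set, which is a prerequisite for its localization on $\Gamma_f$, require a separate and careful choice of test competitors near $\partial\Omega$.
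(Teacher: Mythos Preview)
The paper does not give its own proof of this theorem: it is quoted verbatim from \cite[Theorem~6.5]{DPJ}, and the surrounding remark only records that the $L^1$ regularity of $v$ follows from the transport-density results of \cite{DPP2002,San}. So there is no in-paper argument to compare against; what can be said is that your outline is faithful to the strategy of the cited reference --- Prigozhin's variational inequality for $u$ via the Br\'ezis semigroup, followed by a duality/optimal-transport construction of $(v,\nu)$ --- and in that sense you have correctly reconstructed the architecture of the proof.

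One step in your sketch is more heuristic than the rest and would need tightening: the localisation $\spt(\nu_t)\subseteq\Gamma_f$. You assert that on the transport set $\nabla u(t,\cdot)=\nabla\uhl$, but a priori $u(t,\cdot)\leq\uhl$ with possible strict inequality, so the rays of $u(t,\cdot)$ are its own. The correct argument is that along any ray of $u(t,\cdot)$ reaching $y\in\partial\Omega$ with positive outgoing flux one has $u(t,y)=\phi(y)$ (by (S5)) and $|\nabla u(t,\cdot)|=1$, hence $u(t,x)=\phi(y)+|x-y|$ on the ray; combined with $u(t,\cdot)\leq\uhl$ this forces $u(t,x)=\uhl(x)$ there and $y\in\Pi(x)$. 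Since the positive part of $\xi(t)=f-\partial_t u$ is supported in $\spt(f)$, some point $x$ on that ray lies in $\spt(f)$, whence $y\in\Gamma_f$. In \cite{DPJ} this is obtained through the explicit optimal-transport description of $(v_t,\nu_t)$ rather than by the ray argument you sketch, but the content is the same.
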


\begin{remark}
	As a matter of fact, the result in \cite{DPJ} is obtained in a more general setting: the source $f$ is assumed to be a non-negative bounded measure in $\Omega$, and, in turn, the $v$ component is a non-negative bounded measure in $\Omega$, in general. Nevertheless, the $(v,\nu)$ components are obtained by a duality and optimal transport argument, and hence we can apply the regularity results for transport densities (see \cite[Theorem~4.13]{DPP2002}, or 
	\cite[Theorem~2]{San}), and recover the absolute continuity 
of the $v$ component with respect to the Lebesgue measure in $\Omega$.
\end{remark}

The following result, firstly proved in \cite{Pr}, and then detailed in \cite[Theorem 4.3]{DPJ} 
(see also \cite{BoBu}),  
shows that the PDEs system \eqref{f:P} may be considered as an equivalent first order condition for a constrained optimization problem solved by the $u$ component, in such a way the other components $(v,\nu)$ of the solution may be understood as Lagrange multipliers.

\begin{theorem}\label{t:incl}
The following equivalence holds.
\begin{itemize}
\item[(i)]
If $(u,v,\nu)$ is a solution of \eqref{f:P}, then for every $T>0$ it holds that
	\begin{equation}
		\label{f:var2}
\int_\Omega \left(f(x)-\partial_t u(t,x)\right)\left(w(x) - u(t,x) \right) \, dx\leq 0 \, \qquad \forall w\in\Xp,
	\end{equation}
for a.e.\  $t\in ]0,T[$.

\item[(ii)]
Let $u\in L^\infty(0,T;W^{1,\infty}(\Omega))$ satisfy (S1), the initial condition (S4),
and the maximality condition~\eqref{f:var2}.
Then there exists $(v,\nu)$ such that $(u,v,\nu)$ is a solution to \eqref{f:P}.
\end{itemize}
\end{theorem}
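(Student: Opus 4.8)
The plan is to prove the two implications separately: (i) by inserting the non-smooth test function $w-u(t,\cdot)$ into the weak formulation (S6), and (ii) by combining (i) with the uniqueness of the $u$-component granted by Theorem~\ref{t:exidp}.

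For (i), fix a solution $(u,v,\nu)$. The first step is to extend (S6) to non-smooth test functions: every term in (S6) depends on $\varphi$ only through $\varphi|_{\overline\Omega}$, and since $v\,\nabla u\in L^\infty(0,T;L^1(\Omega;\R^N))$, $\nu_t\in\meas^+(\partial\Omega)$ and $f\in L^1_+(\Omega)$, a routine density argument (mollify and cut off, so that $\varphi_n\to\varphi$ uniformly on $\overline\Omega$ with $\nabla\varphi_n$ bounded in $L^\infty$ and converging weakly-$*$) shows that (S6) holds for every test function in $\lipu(\Omega)$; because $\partial_t u\in L^2(]0,T[\times\Omega)$, the $\mathcal D'(0,T)$-identity in (S6) then reads $\int_\Omega\partial_t u\,\varphi\,dx+\int_\Omega v\,\nabla u\cdot\nabla\varphi\,dx=\int_\Omega f\varphi\,dx-\int_{\partial\Omega}\varphi\,d\nu_t$ for a.e.\ $t$. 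Using that $\Xp$ is separable in the uniform topology one fixes a single exceptional set of times of measure zero outside which, by linearity, this identity is available with $\varphi=w-u(t,\cdot)$ for \emph{every} $w\in\Xp$. For such $t$ and $w$ one rearranges to
\[
\int_\Omega\bigl(f-\partial_t u\bigr)(w-u)\,dx=\int_\Omega v\,\nabla u\cdot(\nabla w-\nabla u)\,dx+\int_{\partial\Omega}(w-u)\,d\nu_t,
\]
and it remains to note that both terms on the right are nonpositive: on $\{v>0\}$ one has $|\nabla u|=1$ by (S3), so $\nabla u\cdot\nabla w-|\nabla u|^2\le|\nabla w|-1\le0$ since $w\in\lipu(\Omega)$; and $\nu_t$-a.e.\ on $\partial\Omega$ one has $w\le\phi=u$ by $w\in\Xp$ and (S5), with $\nu_t\ge0$. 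This is \eqref{f:var2}.

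For (ii), I would first prove that (S1), (S4) and \eqref{f:var2} determine $u$ uniquely. If $u_1,u_2$ are two such functions, I write \eqref{f:var2} for $u_1$ with the admissible competitor $w=u_2(t,\cdot)$, and for $u_2$ with $w=u_1(t,\cdot)$, and add the two inequalities: the contributions of $f$ cancel and one is left with $\int_\Omega\partial_t(u_1-u_2)\,(u_1-u_2)\,dx\le0$ for a.e.\ $t$. By Remark~\ref{r:reg} together with $\partial_tu_i\in L^2$, the function $t\mapsto\|u_1(t,\cdot)-u_2(t,\cdot)\|_{L^2(\Omega)}^2$ is absolutely continuous with a.e.\ derivative $2\int_\Omega\partial_t(u_1-u_2)(u_1-u_2)\,dx\le0$; since it vanishes at $t=0$ by (S4), it is identically zero, whence $u_1\equiv u_2$. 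Now, given $u$ as in the statement, Theorem~\ref{t:exidp} provides a solution $(\bar u,\bar v,\bar\nu)$ of \eqref{f:P}; its first component satisfies (S1), (S4) and, by part (i), also \eqref{f:var2}, so the uniqueness just established forces $\bar u=u$, and therefore $(u,\bar v,\bar\nu)$ is a solution of \eqref{f:P}.

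The steps requiring care are, in (i), the legitimacy of testing (S6) against the time-dependent Lipschitz function $w-u(t,\cdot)$ (the density step and the extraction of a common exceptional set of times), and, in (ii), the $L^2$-valued chain rule behind the Gronwall argument, which is exactly the embedding recalled in Remark~\ref{r:reg}. It is worth noting that (ii) can also be obtained without invoking Theorem~\ref{t:exidp}, by a convex-duality argument: for a.e.\ $t$, \eqref{f:var2} says that $u(t,\cdot)$ minimizes $w\mapsto\int_\Omega(\partial_t u(t,\cdot)-f)\,w\,dx$ over the convex set $\Xp$, and the dual problem yields the Lagrange multipliers $v(t,\cdot)\ge0$, satisfying $(1-|\nabla u|)v=0$ by complementary slackness, for the gradient constraint, and $\nu_t\in\meas^+(\partial\Omega)$, concentrated on $\{u=\phi\}$, for the obstacle constraint on $\partial\Omega$; in that route the main obstacles are the constraint qualification, the measurable $t$-dependence of the dual optimizers, and the absolute continuity of $v$ from transport-density regularity.
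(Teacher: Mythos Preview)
The paper does not give its own proof of Theorem~\ref{t:incl}: it attributes the result to \cite{Pr} and \cite[Theorem~4.3]{DPJ} (see also \cite{BoBu}) and states it without argument. So there is no in-paper proof to compare against; what follows is an assessment of your proposal on its own.

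Your argument for (i) is the standard one and is correct. The only points worth polishing are: (a) the density step should be phrased so that it yields the pointwise-in-$t$ identity for \emph{all} Lipschitz $\varphi$ on a common full-measure set of times---you do this via separability, which is fine since the functionals on both sides are continuous for the topology of uniform convergence of $\varphi$ together with weak-$*$ convergence of $\nabla\varphi$ in $L^\infty$, paired against $v\nabla u\in L^1$; (b) for the chain rule on $\tfrac12\|u_1-u_2\|_{L^2}^2$ you can cite Lemma~\ref{l:dpj} directly rather than Remark~\ref{r:reg}, which is exactly the lemma the paper records for this purpose.

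Your argument for (ii) is correct and is in fact a clean shortcut: instead of reconstructing $(v,\nu)$ by duality, you take the solution $(\bar u,\bar v,\bar\nu)$ provided by Theorem~\ref{t:exidp}, use (i) to see that $\bar u$ satisfies \eqref{f:var2}, and then invoke the uniqueness of solutions to the evolutionary variational inequality (your Gronwall step, which is essentially Theorem~\ref{t:uniq}) to identify $\bar u$ with the given $u$. This is logically sound in the paper's order of presentation, since Theorem~\ref{t:exidp} is stated independently of Theorem~\ref{t:incl}. The alternative duality route you sketch at the end is closer in spirit to the constructions in \cite{DPJ,BoBu}; its advantages are that it does not presuppose an existence theorem and that it gives more structural information on $(v,\nu)$ (optimal-transport interpretation, support of $\nu$), at the cost of the technical issues you correctly flag: constraint qualification, measurable selection in $t$, and the absolute continuity of $v$ via transport-density regularity.
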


\begin{remark}
	Using the terminology of convex analysis, when $f\in L^2(\Omega)$, 
	the maximality condition~\ref{f:var2} 
	can be rephrased as a differential inclusion. 
	
	Specifically, let $I \colon L^2(\Omega) \to [0,+\infty]$  denote the indicator function of the convex set $\Xp$, defined by
	\[
	I(w) :=
	\begin{cases}
		0, & \text{if}\  w\in\Xp, \\
		+\infty, & \text{otherwise},
	\end{cases}
	\]
	and let us denote by $\partial I(w)$ its subdifferential at $w\in L^2(\Omega)$.

	Then, the variational inequality~\eqref{f:var2}  is equivalent to  
	%for every $T>0$, $u$ is the (unique) solution to the variational inequality
	the differential inclusion 
	\[
	f - \partial_t u(t,\cdot) \in \partial I(u(t,\cdot)), \qquad t\geq 0.
	\]
	In the following, we will say that $u$ satisfies $f - \partial_t u \in \partial I (u(t, \cdot))$
	if the maximization condition~\eqref{f:var2} holds.
\end{remark}

The variational inequality \eqref{f:var2} provides a great deal of information regarding the properties of the solution's $u$ component (as those in Theorem~\ref{t:exidp}(i)). To get them, it is useful to recall the following derivation rule, which 
has been proved in a more general setting in
\cite[Lemma~4.2]{DPJ}.

\begin{lemma}
\label{l:dpj}
Let $w\in L^1(0,T; W^{1,\infty}(\Omega))$ with $\partial_t w \in L^2(]0,T[\times\Omega)$.
Then
\[
\frac{1}{2}\frac{d}{dt} \int_\Omega |w(t,x)|^2 \, dx =
\int_\Omega w(t,x)\, \partial_t w(t,x)\, dx,
\qquad
\text{for a.e.}\ t\in ]0,T[.
\]
\end{lemma}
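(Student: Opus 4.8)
\emph{Approach.} I would reduce the statement to a classical fact about vector-valued Sobolev functions; the gradient hypothesis is not actually needed. Since $\Omega$ is bounded, $W^{1,\infty}(\Omega)\hookrightarrow L^2(\Omega)$, so the assumptions give $w\in L^1(0,T;L^2(\Omega))$ with distributional time derivative $\partial_t w\in L^2(0,T;L^2(\Omega))\subset L^1(0,T;L^2(\Omega))$, i.e.\ $w\in W^{1,1}(0,T;L^2(\Omega))$. By the standard theory of vector-valued Sobolev functions (see, e.g., \cite{Sal}), $w$ admits a representative in $AC([0,T];L^2(\Omega))$, still denoted $w$, such that $w(t)-w(s)=\int_s^t \partial_t w(\tau)\,d\tau$ in $L^2(\Omega)$ for all $0\le s\le t\le T$; moreover, by Lebesgue differentiation of the Bochner integral, for a.e.\ $t\in ]0,T[$ the difference quotients $h^{-1}\bigl(w(t+h)-w(t)\bigr)$ converge to $\partial_t w(t)$ \emph{strongly} in $L^2(\Omega)$ as $h\to 0$. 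The identity in the statement is to be read, as customary, for this absolutely continuous representative of $t\mapsto\int_\Omega |w(t,x)|^2\,dx$.

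\emph{Step 1.} I would first check that $g(t):=\|w(t)\|_{L^2(\Omega)}^2$ is absolutely continuous on $[0,T]$. Indeed $t\mapsto \|w(t)\|_{L^2(\Omega)}$ is the composition of the $1$-Lipschitz norm with the absolutely continuous curve $t\mapsto w(t)$, hence absolutely continuous; being continuous on a compact interval it is bounded, so its square $g$ is absolutely continuous as well. In particular $g$ is differentiable a.e.\ and $g(t)-g(s)=\int_s^t g'(\tau)\,d\tau$.

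\emph{Step 2.} Then I would compute $g'$ pointwise. Fix $t\in ]0,T[$ at which $g$ is differentiable and the difference quotients of $w$ converge to $\partial_t w(t)$ in $L^2(\Omega)$; almost every $t$ has both properties. Using the Hilbert-space identity $\|a\|^2-\|b\|^2=\langle a-b,\,a+b\rangle$ with $a=w(t+h)$ and $b=w(t)$,
\[
\frac{g(t+h)-g(t)}{h}=\Bigl\langle \frac{w(t+h)-w(t)}{h},\; w(t+h)+w(t)\Bigr\rangle_{L^2(\Omega)}.
\]
As $h\to 0$ we have $w(t+h)\to w(t)$ in $L^2(\Omega)$ by continuity and $h^{-1}(w(t+h)-w(t))\to\partial_t w(t)$ in $L^2(\Omega)$, so the right-hand side converges to $\langle \partial_t w(t),\,2w(t)\rangle_{L^2(\Omega)}$. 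Hence $g'(t)=2\int_\Omega w(t,x)\,\partial_t w(t,x)\,dx$ for a.e.\ $t\in ]0,T[$, which is exactly the asserted identity.

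\emph{Main obstacle.} I do not expect a serious difficulty here; the only delicate points are (a) invoking the embedding $W^{1,1}(0,T;L^2(\Omega))\hookrightarrow AC([0,T];L^2(\Omega))$ together with the identification of the distributional time derivative with the a.e.\ strong (difference-quotient) derivative, and (b) making sure that the convergence of the difference quotients is \emph{strong} in $L^2(\Omega)$, so that the inner product in Step~2 passes to the limit. An alternative would be to mollify $w$ in time, prove the identity for the smooth approximants, and pass to the limit tested against $\psi\in C^\infty_c(0,T)$; this works, but it requires essentially the same ingredients to identify $\lim_{\varepsilon\to0}\int_0^T\psi\,\|w_\varepsilon\|^2\,dt$, so the route through the absolutely continuous representative seems the most economical.
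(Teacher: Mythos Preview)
Your argument is correct and is the standard route through the theory of vector-valued Sobolev functions; the reduction to $w\in W^{1,1}(0,T;L^2(\Omega))\hookrightarrow AC([0,T];L^2(\Omega))$ together with the a.e.\ strong convergence of difference quotients is exactly what is needed, and your handling of the two ``delicate points'' is sound.

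There is nothing to compare against, however: the paper does not prove this lemma but merely states it and cites \cite[Lemma~4.2]{DPJ}, where it is proved in a more general setting. Your proof is thus a self-contained substitute for that external reference. One minor remark: you correctly observe that the $W^{1,\infty}$ hypothesis on the spatial variable is stronger than necessary---only $w\in L^2(0,T;L^2(\Omega))$ (or even $L^1$ in time) is used---which is consistent with the paper's comment that the cited result holds ``in a more general setting''.
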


The uniqueness of the solution to \eqref{f:var2} with initial datum $u(0,\cdot) = u_0\in\Xp$, and its monotonicity with respect to $t$  are  consequences of
the following comparison principle, proved
in \cite[Lemma~3.1]{CCS} or \cite[Proposition~4.1]{DPJ},
and valid also for time-varying sources.
Since the result is relevant to our purposes, we include 
a sketch of its proof for completeness.

\begin{proposition}[Comparison Principle]
	\label{p:comp}
	Let $f_1, f_2 \in L^\infty(0,T; L^1_+(\Omega))$, $u_0^1, u_0^2\in\Xp$, with $f_1\geq f_2$, and
	$u_0^1 \geq u_0^2$.
	Let $u_i$, $i=1,2$, be the solutions to
	\begin{equation*}
		%\label{f:var1}
		\begin{cases}
			f_i(t,\cdot) - \partial_t u_i(t,\cdot) \in \partial I(u_i(t,\cdot)),
			& \text{for a.e.}\ t\in ]0,T[,
			%&\text{in}\ L^2(\Omega),
			\\
			u_i(0,\cdot) = u_0^i.
		\end{cases}
	\end{equation*}
	Then $u_1 \geq u_2$ on $]0,T[\times\Omega$.
\end{proposition}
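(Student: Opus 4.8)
The plan is to combine the lattice structure of the admissible set $\Xp$ with the variational characterization of the differential inclusion. First I would check that $\Xp$ is closed under the pointwise lattice operations: if $u_1,u_2\in\Xp$, then $u_1\vee u_2:=\max\{u_1,u_2\}$ and $u_1\wedge u_2:=\min\{u_1,u_2\}$ again belong to $\Xp$, since the $1$-Lipschitz bound, the sign condition $u\ge 0$ in $\overline\Omega$, and the constraint $u\le\phi$ on $\partial\Omega$ are all stable under pointwise maxima and minima. Recalling (as noted after Theorem~\ref{t:incl}) that the inclusion $f_i-\partial_t u_i(t,\cdot)\in\partial I(u_i(t,\cdot))$ is exactly the variational inequality~\eqref{f:var2} with source $f_i$, I would then fix a time $t$ in the full-measure set where both inequalities hold, test the inequality for $u_1$ against the competitor $w=u_1(t,\cdot)\vee u_2(t,\cdot)\in\Xp$, and the inequality for $u_2$ against $w=u_1(t,\cdot)\wedge u_2(t,\cdot)\in\Xp$. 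Setting $z:=(u_2-u_1)^+$, the first choice gives $w-u_1=z$ and the second gives $w-u_2=-z$; adding the two inequalities makes the cross terms cancel and yields, for a.e.\ $t\in ]0,T[$,
\[
\int_\Omega \bigl(f_1(t,x)-f_2(t,x)\bigr)\,z(t,x)\,dx \;\le\; \int_\Omega \bigl(\partial_t u_1(t,x)-\partial_t u_2(t,x)\bigr)\,z(t,x)\,dx .
\]

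The next step is to recognize the right-hand side as $-\frac{1}{2}\frac{d}{dt}\|z(t,\cdot)\|_{L^2(\Omega)}^2$. Since $u_1,u_2\in L^\infty(0,T;W^{1,\infty}(\Omega))$ with $\partial_t u_i\in L^2(]0,T[\times\Omega)$, the positive part $z=(u_2-u_1)^+$ has the same regularity, with $\partial_t z=\chi_{\{u_2>u_1\}}\,\partial_t(u_2-u_1)$, so that $z\,\partial_t z = z\,\partial_t(u_2-u_1)$ a.e.; hence Lemma~\ref{l:dpj}, applied to $w=z$, gives
\[
\int_\Omega\bigl(\partial_t u_1-\partial_t u_2\bigr)\,z\,dx
= -\int_\Omega z\,\partial_t z\,dx
= -\frac{1}{2}\,\frac{d}{dt}\int_\Omega |z(t,x)|^2\,dx
\qquad\text{for a.e.\ }t .
\]
Plugging this into the displayed inequality and using $f_1\ge f_2$ (so the left-hand side is $\ge 0$), I conclude that $g(t):=\int_\Omega|z(t,x)|^2\,dx$ satisfies $g'(t)\le 0$ for a.e.\ $t$. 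By Lemma~\ref{l:dpj} the function $g$ is absolutely continuous on $[0,T]$, hence non-increasing; since $g(0)=\int_\Omega|(u_0^2-u_0^1)^+|^2\,dx=0$ by the assumption $u_0^1\ge u_0^2$, and $g\ge 0$, it follows that $g\equiv 0$, i.e.\ $(u_2-u_1)^+=0$ a.e.\ in $]0,T[\times\Omega$. Recalling that $u_i\in C([0,T];L^2(\Omega))$ (Remark~\ref{r:reg}), this gives $u_1\ge u_2$ on $]0,T[\times\Omega$.

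Once the competitors are chosen, the rest is a standard Gronwall-type argument with vanishing initial datum. The two points that require some care are the verification that $\Xp$ is a lattice — needed to guarantee the admissibility of the test functions — and the differentiation rule for the positive part $z=(u_2-u_1)^+$, namely that $z$ still has an $L^2$ time derivative and that the set $\{u_2\le u_1\}$ contributes nothing, so that Lemma~\ref{l:dpj} may legitimately be applied to $z$. I expect this bit of measure-theoretic bookkeeping to be the only real (and minor) obstacle.
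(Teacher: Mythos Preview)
Your proposal is correct and follows essentially the same route as the paper: both arguments use the lattice structure of $\Xp$, test $u_1$ against $u_1\vee u_2$ and $u_2$ against $u_1\wedge u_2$, and then apply Lemma~\ref{l:dpj} to the function $(u_2-u_1)^+=u^+-u_1$ to obtain a differential inequality for $\|(u_2-u_1)^+\|_{L^2}^2$ that forces it to vanish. The only cosmetic difference is that the paper first rewrites the second inequality in terms of $u^+$ before combining, whereas you add the two inequalities directly; the resulting estimate is identical.
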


\begin{proof}
Let $u^+(t,x) := \max\{u_1(t,x), u_2(t,x)\}$ and $u^-(t,x) := \min\{u_1(t,x), u_2(t,x)\}$;
observe that $u^+, u^-\in\Xp$.
By the optimality of $u_1$ and $u_2$, using respectively $u^+$ and $u^-$
as competitors in~\eqref{f:var2}, we have that
\[
\begin{split}
\int_\Omega( f_1(t,x) - \partial_t u_1(t,x))( u^+(t,x) - u_1(t,x))\, dx & \leq 0, \\
\int_\Omega(  f_2(t,x) - \partial_t u_2(t,x))(u^- (t,x)- u_2(t,x))\, dx & \leq 0,
\end{split}
\qquad\text{a.e.\ } t\geq 0.
\]
Since $u^+ - u_1 = (u_2-u_1)\chi_{\{u_1 < u_2\}} = u_2 - u^-$
and $\chi_{\{u_1 < u_2\}}\, \partial_t u_2 = \chi_{\{u_1 < u_2\}}\, \partial_t u^+$,
we obtain that
\[
\int_\Omega(  f_2 (t,x)- \partial_t u^+(t,x))( u_1 (t,x)- u^+(t,x) )\, dx
= 
\int_\Omega(  f_2 (t,x)- \partial_t u_2(t,x)) ( u^- (t,x)- u_2(t,x) )\, dx
\leq 0,
\]
so that, by Lemma~\ref{l:dpj} and recalling that $f_2\geq f_1$,
\[
\begin{split}
\frac{1}{2} & \frac{d}{dt}   \int_\Omega |u^+(t,x) - u_1(t,x)|^2\, dx
 =
\int_\Omega( \partial_t u^+(t,x) - \partial_t u_1(t,x))( u^+(t,x) - u_1(t,x) )\, dx \\
& \leq
\int_\Omega f_2(t,x) (u^+(t,x) - u_1 (t,x))\, dx
-
\int_\Omega f_1(t,x) ( u^+(t,x) - u_1(t,x)) \, dx
\leq 0
\end{split}
\]
for a.e.\ $t\geq 0$.
By Remark \ref{r:reg},  $\psi(t) := \|u^+(t) - u_1(t)\|^2_{L^2(\Omega)}$ 
is a continuous function with $\psi(0) = 0$,
hence, from the above inequality we conclude that
$\psi\equiv 0$, i.e.\
$u^+(t, \cdot) = u_1(t,\cdot)$ for every $t\geq 0$.
\end{proof}

\begin{theorem}\label{t:uniq}
Given $f \in L^\infty(0,T; L^1_+(\Omega))$, and $u_0\in\Xp$, the solution to the problem
\begin{equation*}
	\begin{cases}
		f(t,\cdot) - \partial_t u(t,\cdot) \in \partial I(u(t,\cdot)),
		& \text{for a.e.}\ t\in ]0,T[,
		%&\text{in}\ L^2(\Omega),
		\\
		u(0,\cdot) = u_0,
	\end{cases}
\end{equation*}
is unique, and $t \mapsto u(t,\cdot)$ is a monotone non-decreasing function in $\R^+$.
\end{theorem}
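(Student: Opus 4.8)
The plan is to derive both assertions—uniqueness and monotonicity in $t$—directly from the Comparison Principle (Proposition~\ref{p:comp}), which has already been established for time-dependent sources and for general initial data in $\Xp$. The only preparatory observation I need is that a solution $u$ in the sense of~\eqref{f:var2} with initial datum $u_0\in\Xp$ automatically satisfies $u(t,\cdot)\in\Xp$ for every $t$ (by Remark~\ref{r:reg}) and that $t\mapsto u(t,\cdot)$ is continuous in $L^2(\Omega)$, so pointwise-in-time statements are meaningful.

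\textbf{Uniqueness.} Suppose $u_1$ and $u_2$ are both solutions with the same source $f$ and the same initial datum $u_0$. Apply Proposition~\ref{p:comp} first with $f_1=f_2=f$ and $u_0^1=u_0^2=u_0$, using the obvious (non-strict) inequalities $f\geq f$ and $u_0\geq u_0$: the conclusion gives $u_1\geq u_2$ on $]0,T[\times\Omega$. Swapping the roles of $u_1$ and $u_2$ in the same argument gives $u_2\geq u_1$. Hence $u_1=u_2$ a.e., and by $L^2$-continuity in time they agree for every $t$. Since $T>0$ was arbitrary this yields uniqueness on all of $\R^+$.

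\textbf{Monotonicity.} Fix the unique solution $u$ with initial datum $u_0\in\Xp$, and fix $h>0$. The key point is that the time-shifted function $\widetilde u(t,\cdot):=u(t+h,\cdot)$ is again a solution of the differential inclusion, now with source $\widetilde f(t,\cdot)=f(t+h,\cdot)$—in our setting $f$ is time-independent, so $\widetilde f=f$—and with initial datum $\widetilde u(0,\cdot)=u(h,\cdot)$. Since $u(h,\cdot)\in\Xp$ and, because the source is non-negative and the evolution starts from $u_0$, one has $u(h,\cdot)\geq u_0$; in the situation of interest $u_0=0$, so $u(h,\cdot)\geq 0=u_0$ is immediate, and for a general $u_0\in\Xp$ it follows from comparing $u$ with the stationary-in-time competitor. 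Now invoke Proposition~\ref{p:comp} with $f_1=\widetilde f=f$, $f_2=f$, $u_0^1=u(h,\cdot)\geq u_0=u_0^2$: the conclusion is $\widetilde u(t,\cdot)\geq u(t,\cdot)$, i.e. $u(t+h,\cdot)\geq u(t,\cdot)$ for all $t\geq 0$. As $h>0$ is arbitrary, $t\mapsto u(t,\cdot)$ is non-decreasing.

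\textbf{Main obstacle.} The only non-formal step is the translation-invariance claim: that $\widetilde u(t,\cdot)=u(t+h,\cdot)$ solves the same differential inclusion with the shifted source and shifted initial datum. This is essentially immediate since the inclusion $f(t,\cdot)-\partial_t u(t,\cdot)\in\partial I(u(t,\cdot))$ is autonomous in its dependence on $t$ apart from the explicit source term, and $\partial_t\widetilde u(t,\cdot)=\partial_t u(t+h,\cdot)$; but one should check that the regularity requirements (S1)—in particular $\partial_t\widetilde u\in L^2(]0,T[\times\Omega)$ and $\widetilde u(t,\cdot)\in\Xp$ for a.e.\ $t$—are inherited on $]0,T[$ from the corresponding properties of $u$ on $]0,T+h[$, which is clear. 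With this in hand the argument is complete.
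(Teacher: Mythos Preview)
Your uniqueness argument is correct and is exactly the paper's.

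For monotonicity there is a genuine gap. The theorem is stated for a general non-negative source $f\in L^\infty(0,T;L^1_+(\Omega))$, but your time-shift argument hinges on applying Proposition~\ref{p:comp} with $f_1=\widetilde f=f(\cdot+h,\cdot)$ and $f_2=f$; this requires $f(\cdot+h,\cdot)\geq f(\cdot,\cdot)$, which is not assumed. Your parenthetical ``in our setting $f$ is time-independent'' is precisely where the argument leaves the hypotheses of the theorem.

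The paper's route is both simpler and covers the time-dependent case. Fix $t_0>0$ and work on $[t_0,T]$ (equivalently, shift time by $t_0$). The restriction of $u$ solves the inclusion with source $f_1(s,\cdot)=f(t_0+s,\cdot)$ and initial datum $u(t_0,\cdot)$; the constant-in-time function $u_2(s,\cdot)\equiv u(t_0,\cdot)$ solves it with source $f_2\equiv 0$ and the same initial datum. Since $f_1\geq 0=f_2$, Proposition~\ref{p:comp} gives $u(t,\cdot)\geq u(t_0,\cdot)$ for all $t\geq t_0$. Note that you already invoke exactly this comparison (``comparing $u$ with the stationary-in-time competitor'') to obtain $u(h,\cdot)\geq u_0$; applying it at an arbitrary $t_0$ rather than only at $t_0=0$ yields monotonicity directly and makes the second comparison with the shifted $\widetilde u$ superfluous.
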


\begin{proof}
	The uniqueness of the solution is a direct consequence of the comparison principle. 
	
	Moreover, fixed $t_0>0$, and applying the comparison principle to $f_1=f$, $f_2 \equiv 0$, and $u^1_0=u^2_0=u(t_0,x)$, we get $u(t,x) \geq u(t_0,x)$ for $t\geq t_0$.
\end{proof}

\section{Asymptotic stability of the profiles}\label{s:due}

Let $(u,v,\nu)$ be a solution to \eqref{f:P}. 
Since, by Theorem \ref{t:incl}(i), Theorem \ref{t:uniq}, and \eqref{f:maxX},  $t \mapsto u(t, \cdot)$ 
is a monotone non-decreasing function in $\R^+$ and
$0\leq u(t, \cdot) \leq \uhl$ for every $t$,
then there exists the limit
\begin{equation}\label{f:uinfty}
\ul(x) := \lim_{t\to +\infty} u(t,x),
\qquad x\in\overline{\Omega}.
\end{equation}
Moreover $\ul\in \Xp$, due to the fact that $u(t,\cdot)$ belongs to $\Xp$ for every $t \geq 0$, and the convergence in 
\eqref{f:uinfty} is uniform in $\overline{\Omega}$.

We aim to provide an explicit representation of $\ul$. 
As a first step, we show that the asymptotic profile is maximal where the source is active.

\begin{lemma}
\label{l:conv}
It holds that
$\ul(x) =\uhl(x)$
for every $x\in\spt(f)$.
\end{lemma}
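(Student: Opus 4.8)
The plan is to test the variational inequality~\eqref{f:var2} with the competitor $w=\uhl$, which is admissible because $\uhl$ is the maximal element of $\Xp$ (recall~\eqref{f:maxX}). Introducing
\[
\ell(t):=\int_\Omega f(x)\,\bigl(\uhl(x)-u(t,x)\bigr)\,dx\ge 0,\qquad
g(t):=\frac12\int_\Omega\bigl|\uhl(x)-u(t,x)\bigr|^2\,dx,
\]
this choice yields, for a.e.\ $t>0$, the inequality $\ell(t)\le\int_\Omega\partial_t u(t,x)\,(\uhl(x)-u(t,x))\,dx$, whose right-hand side equals $-g'(t)$ by Lemma~\ref{l:dpj} applied to $\uhl-u(t,\cdot)$ (its hypotheses hold since $\uhl\in W^{1,\infty}(\Omega)$ and $u$ satisfies (S1)). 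Note that $g$ is non-negative and, since $0\le\uhl-u(t,\cdot)$ is non-increasing in $t$ (recall that $t\mapsto u(t,\cdot)$ is non-decreasing and $u(t,\cdot)\le\uhl$), also non-increasing.

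I would then integrate over $t\in(0,T)$: because $g$ is non-increasing, its almost everywhere derivative satisfies $\int_0^T(-g'(t))\,dt\le g(0)-g(T)\le g(0)$, so that $\int_0^T\ell(t)\,dt\le g(0)$ for every $T$, hence $\int_0^{+\infty}\ell(t)\,dt\le g(0)<+\infty$. Being non-negative and non-increasing with finite integral over $(0,+\infty)$, $\ell(t)$ must tend to $0$ as $t\to+\infty$. On the other hand, by dominated convergence (the integrands are bounded by $f\,\uhl\in L^1(\Omega)$ and converge pointwise, by~\eqref{f:uinfty}, to $f\,(\uhl-\ul)$), $\ell(t)\to\int_\Omega f(x)\,(\uhl(x)-\ul(x))\,dx$. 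Therefore $\int_\Omega f\,(\uhl-\ul)=0$, and since $f\ge 0$ while $\uhl-\ul\ge 0$ (by~\eqref{f:maxX} and $\ul\in\Xp$), we conclude that $\uhl=\ul$ almost everywhere on $\{f>0\}$.

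It remains to upgrade this to the pointwise statement on $\spt(f)$, and this is the only step requiring real care. The function $h:=\uhl-\ul$ is continuous and non-negative on $\overline\Omega$. If $h(x_0)>0$ for some $x_0\in\spt(f)$, then $h>0$ on $U\cap\overline\Omega$ for some open neighbourhood $U$ of $x_0$; but $x_0\in\spt(f)$ means that $f$ is not almost everywhere $0$ on $U$, so $\{x\in U:f(x)>0\}$ has positive measure, contradicting $h=0$ a.e.\ on $\{f>0\}$. Hence $h\equiv 0$ on $\spt(f)$, which is the assertion. The energy estimate itself is elementary, and the one potential pitfall in it — passing from the almost everywhere differential inequality to its integrated form — is sidestepped above by using only the monotonicity of $g$; so the genuine obstacle is really the topological passage from an a.e.\ identity on $\{f>0\}$ to a pointwise one on the closed set $\spt(f)$, which relies on the continuity of both profiles and the precise definition of the essential support.
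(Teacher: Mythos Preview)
Your proof is correct and follows essentially the same approach as the paper's: both test~\eqref{f:var2} with $w=\uhl$, invoke Lemma~\ref{l:dpj} to obtain the energy inequality $g'(t)\le -\ell(t)$, and finish with the continuity argument passing from an a.e.\ statement on $\{f>0\}$ to a pointwise one on $\spt(f)$. The only cosmetic difference is organizational: the paper assumes by contradiction that $\ell(t)\ge\rho>0$ for all $t$ and reads off directly that $g$ decreases linearly, whereas you integrate first to get $\int_0^\infty\ell<\infty$, deduce $\ell(t)\to 0$ from monotonicity, and identify the limit via dominated convergence.
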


\begin{proof}
Assume by contradiction that there exists
$x_0\in\spt(f)\subset\overline{\Omega}$ such that
$\ul(x_0) < \uhl(x_0)$.
Let $\delta := (\uhl(x_0) - \ul(x_0))/2$ and let $r>0$
be such that $\uhl(x) - \ul(x) \geq \delta$ for
every $x\in B_r(x_0) \cap\Omega$.
Since $f\in L^1_+(\Omega)$ and $x_0\in\spt(f)$,
we also have that
$\int_{B_r(x_0) \cap \Omega} f \, dx > 0$.
As a consequence,
for every $t\geq 0$ we have that
\begin{equation}
\label{f:dec}
\begin{split}
\int_\Omega f(x) & \, (\uhl(x) - u(t,x))\, dx  \geq
\int_\Omega f \, (\uhl (x)- \ul(x))\, dx \\ & \geq
\int_{B_r(x_0)\cap\Omega} f(x) \, (\uhl (x)- \ul(x))\, dx
\geq
\delta \int_{B_r(x_0)\cap\Omega} f (x)\, dx
=: \rho > 0,
\quad
\forall t\geq 0.
\end{split}
\end{equation}
By \eqref{f:var2}, using $\uhl\in\Xp$ as competitor, it
holds that
\[
\int_{\Omega} (f(x) - \partial_t u(t,x))(\uhl(x) - u(t,x))\, dx \leq 0.
\]
Hence, by Lemma~\ref{l:dpj},
the above inequality and~\eqref{f:dec}
yield
\[
\begin{split}
\frac{1}{2}  \frac{d}{dt} \int_\Omega |\uhl(x) - u(t,x)|^2\, dx
& = -\int_\Omega \partial_t u (t,x)\, (\uhl (x)- u(t,x))\, dx \\ &
\leq -\int_\Omega f(x) \, (\uhl (x)- u(t,x))\, dx
\leq -\rho,
\quad\forall t\geq 0,
\end{split}
\]
in  contradiction with the fact that $t\mapsto\|\uhl - u(t)\|^2_{L^2(\Omega)}$ is a non-negative continuous function.
\end{proof}

To proceed further with the description of $\ul$, which will turn out to be a stationary solution of the problem (see Section~\ref{s:tre}), we need some definitions.

\begin{definition}
A segment $\cray{y,x}$ is called a transport ray if
$y\in\partial\Omega$, $x\in\overline{\Omega}$,
$\uhl(x) = \uhl(y) + |x-y|$
(i.e.\ $y\in \Pi(x)$),
and $\cray{y,x}$  is not a proper subset of another segment
satisfying the same properties.
The points $y$ and $x$ are called, respectively,
the initial and the final point of the transport ray.

We denote by $J\subset\overline{\Omega}$ the set  of final points of the transport rays, defined by
\begin{equation}
	\label{f:J}
	J := \{
	x\in\overline{\Omega}\colon
	\exists y\in\partial\Omega
	\ \text{s.t.\ $\cray{y,x}$ is a transport ray}
	\}.
\end{equation}
\end{definition}

Given $f\in L^1_+(\Omega)$, we define the function 
\begin{equation*}
	u_f(x) := 0\vee \sup\{
	\uhl(x) - |x-z|\colon z\in\spt(f)
	\}.
\end{equation*}

Finally, recalling the definition of $\Gamma_f$ in \eqref{f:Gf}, we introduce the set of admissible profiles 
achieving the boundary datum $\phi$ on $\Gamma_f$
\begin{equation}
	\label{f:Xf}
	\Xf := \{u\in \Xp\colon u = \phi\ \text{on}\ \Gamma_f\}\,.
\end{equation}

The main features of the function $u_f$, proved in \cite[Proposition 5.3 and Theorem 5.5]{CM11}, are the following.

\begin{theorem}
\label{t:uf}
Assume $f\in L^1_+(\Omega)$, $f \not\equiv 0$. Then the following hold.
\begin{itemize}
	\item[(i)] $u_f\in\Xf$, and $u_f=\uhl$ on $\spt (f)$;
	\item[(ii)] every function $u\in\Xf$ such that $u=u_f$ on $\spt (f)$ satisfies $u_f\leq u\leq \uhl$ on $\overline{\Omega}$;
	\item[(iii)]  $u_f=\uhl$  in $\overline{\Omega}$ if and only if $J \subseteq \spt (f)$.
\end{itemize}
\end{theorem}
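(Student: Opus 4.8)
The plan is to extract all three assertions directly from the definition of $u_f$, using only the two facts that $\uhl$ is $1$-Lipschitz and is the maximal element of $\Xp$ (see~\eqref{f:maxX}); only item~(iii) genuinely invokes the geometry of transport rays. For~(i): since $u_f$ is the supremum of the constant $0$ and of $1$-Lipschitz functions each bounded above by $\uhl$ (the bound being a consequence of the $1$-Lipschitz continuity of $\uhl$), and since $0\le\uhl<\infty$ on $\overline{\Omega}$, one obtains at once $0\le u_f\le\uhl$ on $\overline{\Omega}$ and $u_f\in\lipu(\Omega)$; as $\uhl\le\phi$ on $\partial\Omega$, this already gives $u_f\in\Xp$. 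Taking the competitor indexed by the point $x$ itself shows $u_f(x)\ge\uhl(x)$ whenever $x\in\spt(f)$, whence $u_f=\uhl$ there. Finally, if $y\in\Gamma_f$ then $y\in\Pi(x)$ for some $x\in\spt(f)$, i.e.\ $\uhl(x)=\phi(y)+|x-y|$, so the competitor indexed by $x$, evaluated at $y$, equals $\uhl(x)-|x-y|=\phi(y)$; combined with $u_f(y)\le\uhl(y)\le\phi(y)$ this yields $u_f=\phi$ on $\Gamma_f$, so $u_f\in\Xf$. For~(ii): if $u\in\Xf$ and $u=u_f$ on $\spt(f)$, then $u\le\uhl$ by~\eqref{f:maxX}; and since by~(i) $u=u_f=\uhl$ on $\spt(f)$, the $1$-Lipschitz bound gives $u(\cdot)\ge\uhl(x)-|\cdot-x|$ for every $x\in\spt(f)$, together with $u\ge 0$, so passing to the supremum $u\ge u_f$.

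For~(iii) I would argue as follows. For sufficiency, fix $x\in\overline{\Omega}$, pick $y\in\Pi(x)$, and extend the segment $\cray{y,x}$ — which satisfies $\uhl(x)=\uhl(y)+|x-y|$ — to a maximal such segment; this is a transport ray $\cray{y',x'}$ containing $x$, with $x'\in J$, the extension step being a routine compactness argument in which the convexity of $\Omega$ prevents the initial point from leaving $\partial\Omega$ (except in degenerate cases where the ray lies in $\partial\Omega$). Since $\uhl$ is affine with unit slope along a transport ray, $\uhl(x')=\uhl(x)+|x'-x|$; as $x'\in J\subseteq\spt(f)$, the competitor indexed by $x'$ gives $u_f(x)\ge\uhl(x')-|x'-x|=\uhl(x)$, hence $u_f(x)=\uhl(x)$. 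For necessity, assume $u_f\equiv\uhl$ and let $x'\in J$ be the final point of a transport ray $\cray{y,x'}$, with $\uhl(x')>0$ (the remaining case forces $x'=y\in\partial\Omega$ at the end of a trivial ray). Since $z\mapsto\uhl(z)-|x'-z|$ is continuous on the compact set $\spt(f)$, the value $u_f(x')=\uhl(x')$ is attained at some $z\in\spt(f)$: $\uhl(z)=\uhl(x')+|x'-z|$. If $z\ne x'$, then combining this with $\uhl(x')=\uhl(y)+|x'-y|$ and the $1$-Lipschitz bound $\uhl(z)\le\uhl(y)+|z-y|$ forces $|x'-y|+|x'-z|=|z-y|$, so $y,x',z$ are collinear with $x'$ strictly between $y$ and $z$ and $\uhl(z)=\uhl(y)+|z-y|$; hence $\cray{y,z}$ is a ray-like segment properly containing $\cray{y,x'}$, contradicting its maximality. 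Therefore $z=x'\in\spt(f)$, which gives $J\subseteq\spt(f)$.

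The only non-formal ingredient is the structure theory of transport rays invoked in~(iii): that every point of $\overline{\Omega}$ lies on a maximal transport ray, that $\uhl$ is affine with unit slope along it, and the attendant handling of degenerate configurations (trivial rays, points where $\uhl=0$, or rays contained in $\partial\Omega$). These are exactly the facts established in the analysis of the stationary problem in~\cite{CM11}, and this is where I expect the real work to lie; everything else reduces to a short manipulation of $1$-Lipschitz functions and the maximality property~\eqref{f:maxX}.
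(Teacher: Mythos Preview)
The paper does not prove Theorem~\ref{t:uf} itself: the sentence preceding its statement attributes the result to \cite[Proposition~5.3 and Theorem~5.5]{CM11}. Your self-contained argument is correct and is essentially the one given in that reference. Two small remarks. First, your proof implicitly reads the definition of $u_f$ with $\uhl(z)$ rather than $\uhl(x)$ inside the supremum, so that each competitor $x\mapsto\uhl(z)-|x-z|$ is $1$-Lipschitz and bounded above by $\uhl$ via the $1$-Lipschitz continuity of $\uhl$; this is indeed the formula in \cite{CM11}, and the version printed in the present paper appears to be a typo, since $0\vee(\uhl(x)-d(x,\spt f))$ is in general only $2$-Lipschitz. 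Second, the degenerate case $\uhl(x')=0$ in the necessity half of~(iii) can be disposed of directly rather than deferred: it forces $x'\in\partial\Omega$ with $\phi(x')=0$, and then convexity of $\Omega$ yields a nontrivial inward segment from $x'$ along which $\uhl$ grows with unit slope, so the singleton $\{x'\}$ is never a maximal ray and such an $x'$ does not belong to $J$.
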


We are now ready to prove that the asymptotic profile $u_\infty$ is, in fact, $u_f$.

\begin{theorem}
\label{t:conv}
Under the assumptions (D1)--(D3), the (unique) $u$ component of the solution to \eqref{f:P}
converges monotonically and uniformly to the function $u_f$ as $t\to +\infty$.
\end{theorem}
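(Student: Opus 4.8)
The plan is to show that $\ul$ coincides with $u_f$ by a sandwich argument, combining Lemma~\ref{l:conv} with the characterization of $u_f$ in Theorem~\ref{t:uf} and the comparison principle (Proposition~\ref{p:comp}). First I would record what is already known: by \eqref{f:uinfty} the limit $\ul\in\Xp$ exists, the convergence is monotone and uniform, and by Lemma~\ref{l:conv} we have $\ul = \uhl$ on $\spt(f)$. Since $u_f = \uhl$ on $\spt(f)$ by Theorem~\ref{t:uf}(i), this gives $\ul = u_f$ on $\spt(f)$. If I can additionally show $\ul\in\Xf$, i.e.\ that $\ul = \phi$ on $\Gamma_f$, then Theorem~\ref{t:uf}(ii) applied to $u = \ul$ immediately yields $u_f \leq \ul \leq \uhl$ on $\overline\Omega$; so the crux is to obtain the matching upper bound $\ul \leq u_f$, for which it suffices to combine $\ul\in\Xf$ with $\ul = u_f$ on $\spt(f)$.

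To prove $\ul \leq u_f$, I would instead argue from above, bounding the evolving profile $u(t,\cdot)$ by a stationary comparison. The natural candidate is to use Proposition~\ref{p:comp} with $f_1 = f$, $f_2 = f$, $u_0^1 = 0$ and $u_0^2 = 0$ is useless directly; instead the idea is to compare $u(t,\cdot)$ with the constant-in-time solution whose profile is $u_f$. So I would first verify that $u_f$ is itself a (stationary) solution of the variational inequality with source $f$: that is, $u_f\in\Xp$ and $\int_\Omega f(w - u_f)\,dx \leq 0$ for all $w\in\Xf$; then, because $u(t,\cdot)$ stays below the barrier imposed by the discharge mechanism, apply the comparison principle on $\R^+\times\Omega$ to conclude $u(t,\cdot)\leq u_f$ for every $t$, hence $\ul \leq u_f$. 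A cleaner route, avoiding re-deriving stationarity of $u_f$ from scratch, is to use Theorem~\ref{t:uf}(ii) in the form: any $u\in\Xf$ with $u = u_f$ on $\spt(f)$ satisfies $u\leq\uhl$, together with a direct pointwise estimate $u(t,x)\leq \uhl(x) - |x-z| $ is false in general, so one genuinely needs the variational comparison.

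The main obstacle I expect is establishing that $\ul = \phi$ on $\Gamma_f$, i.e.\ $\ul\in\Xf$. The point $y\in\Gamma_f$ lies in $\Pi(x)$ for some $x\in\spt(f)$; along the transport ray from $y$ to $x$ one has $\uhl$ affine with slope $1$, and $\ul = \uhl$ at $x$ by Lemma~\ref{l:conv}. Since $\ul\in\lipu(\Omega)$ and $\ul\leq\uhl$ everywhere with equality at $x$, the $1$-Lipschitz bound forces $\ul(y) \geq \uhl(x) - |x-y| = \uhl(y) \geq \phi(y)$, so with $\ul\leq\phi$ on $\partial\Omega$ we get $\ul(y) = \phi(y) = \uhl(y)$, hence $\ul\in\Xf$. (One must be a little careful that $x$ may lie on $\partial\Omega$ and that the transport ray stays in $\overline\Omega$, but convexity of $\Omega$ and the definition of transport ray handle this.) With $\ul\in\Xf$ and $\ul = u_f$ on $\spt(f)$ in hand, Theorem~\ref{t:uf}(ii) gives $u_f\leq\ul$, while the upper bound $\ul\leq u_f$ follows from the comparison argument above; therefore $\ul = u_f$, and the monotone uniform convergence is exactly \eqref{f:uinfty}. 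This completes the proof.
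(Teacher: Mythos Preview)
Your proposal is correct and follows the same two-step strategy as the paper: the lower bound $u_f\leq\ul$ comes from Theorem~\ref{t:uf}(ii), and the upper bound $\ul\leq u_f$ from checking that $u_f$ is a stationary solution of~\eqref{f:var2} and invoking the Comparison Principle with $u_0^1=u_f$, $u_0^2=0$. Two minor remarks: in the stationarity check the competitors $w$ should range over $\Xp$, not $\Xf$; and your explicit verification that $\ul\in\Xf$ (needed before applying Theorem~\ref{t:uf}(ii) to $\ul$) is a point the paper's proof leaves implicit.
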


\begin{proof}
By Lemma~\ref{l:conv} and Theorem~\ref{t:uf}(i) we have that
$\ul = \uhl = u_f$ of $\spt(f)$.
Hence, by Theorem~\ref{t:uf}(ii)
we also deduce that 
\begin{equation}\label{f:estul}
u_f \leq \ul \leq \uhl\ \text{in}\ \Omega.
\end{equation}

Observe that $u_f$ is a (stationary) solution of \eqref{f:var2};
specifically, since $u_f = \uhl$ on $\spt(f)$ we have that
\[
\int_\Omega f (w-u_f)\, dx = \int_{\spt(f)} f (w - \uhl)\, dx \leq 0,
\qquad \forall w \in \Xp.
\]
Hence,
by the Comparison Principle
with $f_1 = f_2 = f$, $u^1_0 = u_f$, $u^2_0 = 0$,
we conclude that
$u(t,x) \leq u_f(x)$, $x\in\Omega$, for every $t\geq 0$,
so that $\ul \leq u_f$, which, together with \eqref{f:estul}, concludes the proof.
\end{proof}

\begin{remark}
\label{r:conv}
With a minor modification in the proof,
we can prove that the unique solution $u(t,\cdot)$ to~\eqref{f:var2}
with initial data $u_0\in\Xp$
converges monotonically and uniformly to
$\ul=u_0 \vee  u_f$ as $t\to +\infty$. 
\end{remark}

Since we consider here a source $f$ constant in time, 
one might be led to think that the evolution $u(t,\cdot)$ converges to $u_f$ in finite time.
Nevertheless, in the following examples we show that this
is not in general true.

\begin{example}
\label{e:conv}
Let $\Omega = B_1$ be the unit ball in $\R^N$ centered at the origin,
let $\phi \equiv 0$ and
$f(x) = (N+\alpha) |x|^\alpha$, $\alpha > 0$.
Since $\spt(f) = \overline{\Omega}$, by Theorem \ref{t:uf}(iii) and Theorem \ref{t:conv} (see also Remark \ref{r:dist}),
the limit function $\ul$ coincides with
the distance function from the boundary of $B_1$, i.e.\
\[
\ul(x) = \uhl(x) = 1 - |x|, \qquad x\in B_1.
\]
Starting from $u_0 =0$, in finite time 
$t_\alpha = \dfrac{2^\alpha - 1}{(N+1)\alpha}$
we reach the profile
$u_1(x) = \overline{u}_1(|x|)$,
with
\begin{equation}
\label{f:u1}
\overline{u}_1 (r) := 
\frac{1}{2} - \left|r - \frac{1}{2}\right|\,,
\qquad
r\in [0,1].
\end{equation}
Since the evolution for $t\in [0,t_\alpha]$ is not essential for the sake of our example,
we omit the related computations, and we assume to start at time $t=0$
from this initial profile $u_1$.
Let $\rho \colon [0,+\infty[ \to [0, +\infty[$ be the
unique non-negative solution to the
Cauchy problem
\begin{equation}
\label{f:oderho}
\begin{cases}
\dot{\rho} = - \frac{N}{2}\, \rho^\alpha,
\\
\rho(0) = \frac{1}{2}\,.
\end{cases}
\end{equation}
For every $\alpha > 0$, the solution is a monotone non-increasing function
and converges to $0$ as $t\to +\infty$.
If $\alpha \geq 1$ the solution is strictly positive
and strictly decreasing, while for every $\alpha \in ]0,1[$
there exists $\tau_\alpha > 0$ such that
$\rho(t) > 0$ for $t \in [0, \tau_\alpha[$ and
$\rho(t) = 0$ for every $t \geq \tau_\alpha$.

We claim that the function
\begin{equation}
\label{f:u}
u(t,x) :=
\begin{cases}
1 - 2\rho(t) + |x|,
& \text{if}\ |x| < \rho(t),
\\
1- |x|,
& \text{if}\ \rho(t) \leq |x| \leq 1,
\end{cases}
\end{equation}
solves~\eqref{f:var2} with $u(0,\cdot)=u_1$, and hence, by Theorem \ref{t:incl}(ii), $u(t,x)$ is the evolution of the profile of standing layer for $\phi \equiv 0$ and
$f(x) = (N+\alpha) |x|^\alpha$.
Clearly, $u(t,\cdot)$ converges to $\uhl$ in a finite time $\tau$ if and only if $\rho(t) = 0$
for every $t\geq \tau$.
Hence, if $\alpha\geq 1$ then we have that $u(t,0) < 1=\uhl(0)$ for every $t\geq 0$,
so that we do not have convergence to $\uhl$ in finite time.
If $\alpha\in ]0,1[$, instead, the source has enough mass on every small ball centered at the origin to
force the solution to converge to $\uhl$ in finite time $\tau_\alpha$. 

\smallskip

It remains to prove the claim:
for every $t\geq 0$ and every $w\in \Xp$,
it holds that
\[
K (t):= \int_{B_1} (f(x) -\partial_t u(t,x)) \cdot
(w(x) - u(t,x))\, dx \leq 0, \qquad \forall t \geq 0.
\]
Computing
\[
\partial_t u(t,x) =
\begin{cases}
- 2\dot{\rho}(t) = N\, \rho(t)^\alpha,
& \text{if}\ |x| < \rho(t),
\\
0,
& \text{if}\ \rho(t) < |x| < 1,
\end{cases}
\]
we deduce that
\[
K(t) = \int_{B_{\rho(t)}} ((N+\alpha) |x|^{\alpha} - N\, \rho(t)^\alpha)
\cdot (w(x) - 1 + 2 \rho(t) - |x|)\, dx
+ \int_{B_1 \setminus B_{\rho(t)}} f(x) (w(x) - \uhl(x))\,dx.
\]
Since $w \leq \uhl$, the  integral in $B_1 \setminus B_{\rho(t)}$ is non-positive.
In order to estimate the first integral,
we can use the following inequality,
which takes into account that $|\nabla w| \leq 1$:
setting $\hat{x} := x / |x|$ for every $x\neq 0$,
for every $\rho_0 \in [0,1]$
it holds that
\begin{equation}
\label{f:disw}
w(x)
\geq \widetilde{w}(x)  := w(\rho_0 \hat{x}) + |x| - \rho_0,
\quad\text{if}\ 0 < |x| \leq \rho_0,
\qquad
w(x)
\leq \widetilde{w}(x),
\quad
\text{if}\ \rho_0 \leq |x| \leq 1.
\end{equation}

Let $\rho_0(t) \in [0, \rho(t)]$ be defined by
\[
\rho_0(t) := \left(\frac{N}{N+\alpha}\right)^{1/\alpha} \rho(t),
\]
so that 
$ (N+\alpha) |x|^{\alpha}- N\rho(t)^\alpha$ is negative
if $|x| < \rho_0(t)$ and positive if $|x| > \rho_0(t)$.
Hence, from~\eqref{f:disw} we deduce that
\[
[(N+\alpha) |x|^{\alpha} - N\, \rho(t)^\alpha] w(x)
\leq
[(N+\alpha) |x|^{\alpha} - N\, \rho(t)^\alpha] \widetilde{w}(x),
\quad
\forall |x| < \rho(t),
\]
so that
\[
\begin{split}
K(t) & \leq \int_{B_{\rho(t)}} [(N+\alpha) |x|^{\alpha} - N\, \rho(t)^\alpha]
\cdot [\widetilde{w}(x) - 1 + 2 \rho(t) - |x|]\, dx
\\
& =
\int_{B_{\rho(t)}} [(N+\alpha) |x|^{\alpha} - N\, \rho(t)^\alpha]
\cdot [w(\rho_0(t)\hat{x}) - \rho_0(t) - 1 + 2 \rho(t)]\, dx
\\
& =
N\omega_N  [\overline{w}(\rho_0(t)) - \rho_0(t) - 1 + 2 \rho(t)] \int_0^{\rho(t)} 
[(N+\alpha) r^{\alpha} - N\, \rho(t)^\alpha]
\, r^{N-1}\, dr
= 0,
\end{split}
\]
where $\overline{w}(\rho_0) := \frac{1}{N \omega_N}\int_{S^{N-1}} w(\rho_0 \, \sigma)\, d\sigma$,
concluding the proof of our claim.
\end{example}
 
The construction of the evolution provided in Example~\ref{e:conv} above may seem somewhat mysterious.
Specifically, the role played by the ordinary differential equation~\eqref{f:oderho} should be clarified.
The following construction considering a more general source term could be useful to this aim.

\begin{example}
\label{e:conv2}
Consider a situation similar to the one described in Example~\ref{e:conv}, i.e.\
$\Omega = B_1\subseteq \R^N$ and $\phi \equiv 0$,
but with
\[
f(x) = \fr(|x|),
\quad
\fr\colon [0,1] \to \R
\ \text{continuous, increasing, with}\
\fr(0) = 0
\]
and assume that, at time $t=0$, we start with the profile
$u_1$ defined in~\eqref{f:u1}.

We want to prove that the function $u(t,x)$, defined in~\eqref{f:u},
is a solution to~\eqref{f:var2} for a suitable choice of
the function $\rho\colon [0,+\infty[ \to [0,+\infty[$.

In order to determine $\rho(t)$ (specifically, in order to get an ordinary differential equation
replacing \eqref{f:oderho}) we make the following consideration, based on the phenomenology of growing sandpiles:
only the mass poured in $B_{\rho(t)}$ is going to be incorporated in the pile, while the mass poured in $B_1 \setminus B_{\rho(t)}$
freely rolls downhill, since the profile is maximal. This translates into the additional condition
\[
 \int_{B_{\rho(t)}} \partial_t u(t,x)\, dx = \int_{B_{\rho(t)}} f(x)\, dx\,,\qquad t \geq 0
\]
Since $\partial_t u(t,x) = -2\dot{\rho}(t)$, this condition yields
\begin{equation}
\label{f:utmean}
-2 \dot{\rho}(t) = \frac{1}{|B_{\rho(t)}|} \int_{B_{\rho(t)}} f(x)\, dx, \qquad t \geq 0,
\end{equation}
with the convention that the mean value of the source $f$ on the ball $B_{\rho(t)}$,
appearing at the right-hand side, is equal to $0$ if $\rho(t) = 0$.
Taking into account the initial condition $\rho(0) = 1/2$,
the corresponding Cauchy problem has a unique non-negative solution $\rho(t)$.
We have two possibilities: either $\rho(t) > 0$ for every $t\geq 0$,
or there exists $\tau > 0$ such that $\rho(t) = 0$ for every $t\geq \tau$.
The second case, which corresponds to convergence in finite time,
happens if and only if
\begin{equation}
\label{f:tfin}
\tau:=\int_0^{1/2} \frac{|B_{\rho}|}{\int_{B_{\rho}} f(x)\, dx}\, d\rho
< +\infty.
\end{equation}
Now we choose $\rho(t)$ satisfying \eqref{f:utmean} and such that $\rho(0)=1/2$, and we show, as in Example~\ref{e:conv}, that the function $u$ defined in \eqref{f:u} is the solution to the variational inequality \eqref{f:var2} such that $u(0,\cdot)=u_1$, so that $u$ is the profile of the growing sand pile.

First of all,
since $f$ is a continuous function,
for every $t \geq 0$ there exists $\rho_0(t) \in [0,\rho(t)]$ such that
\begin{equation}
\label{f:fr}
\fr(\rho_0(t) ) = \frac{1}{|B_{\rho(t)}|} \int_{B_{\rho(t)}} f(x)\, dx.
\end{equation}
As a consequence, recalling~\eqref{f:utmean}, it holds that
\[
\partial_t u(t,x) = -2 \dot{\rho}(t) = \fr(\rho_0(t) ),
\qquad \forall t\geq 0.
\]
At this point we can complete the proof as in Example~\ref{e:conv}:
specifically, 
since $\fr$ is increasing we have that
\[
\fr(r) \leq \fr(\rho_0(t)),
\quad \forall r \in [0,  \rho_0(t)],
\qquad
\fr(r) \geq \fr(\rho_0(t)),
\quad \forall r\in [\rho_0(t), \rho(t)],
\]
so that, for every $w\in \Xp[0]$, it holds that
\[
\begin{split}
\int_{B_1} (f(x) -\partial_t u(t,x)) &
(w(x) - u(t,x))\, dx
 \leq 
\int_{B_{\rho(t)}} (\fr(|x|) -\fr(\rho_0(t) ))
(\widetilde{w}(x) - 1 + 2 \rho(t) - |x|)\, dx
\\
& =
N\omega_N  [\overline{w}(\rho_0(t)) - \rho_0(t) - 1 + 2 \rho(t)] \int_0^{\rho(t)} 
(\fr(r) -\fr(\rho_0(t) ) )
\, r^{N-1}\, dr
= 0,
\end{split}
\]
where $\widetilde{w}$ is defined in~\eqref{f:disw}, and the last equality follows from~\eqref{f:fr}.

Summarizing, the evolution $u(t,\cdot)$ in \eqref{f:u} with $\rho$ satisfying \eqref{f:utmean} and $\rho(0)=1/2$ converges
to $\ul=\uhl$ in finite time if and only if \eqref{f:tfin} holds.
\end{example}

The main point in Examples~\ref{e:conv} and~\ref{e:conv2} is that, in a unit time interval, only
a vanishing fraction (as $t\to +\infty$) of the mass poured by the source $f$
is added to the pile, while the remaining portion is discharged at the boundary
of the table.
In Example~\ref{e:conv},
when $\alpha \geq 1$, this infinitesimal fraction cannot fill the maximal profile
$\uhl$ in finite time. 
This happens because the source is too weak near the set of final points of the transport rays
$J = \{0\}$, meaning that condition~\eqref{f:tfin} is not satisfied.
Specifically,
\[
\int_0^{1/2} \frac{|B_{\rho}|}{\int_{B_{\rho}} f(x)\, dx}\, d\rho
= \int_0^{1/2} \rho^{-\alpha}\, d\rho = +\infty,
\qquad \forall \alpha \geq 1.
\]
These considerations lead us to state the following
\begin{conjecture}
\label{conj}
Assume that $f\in L^1_+(\Omega)$
satisfies
\begin{equation}
\label{f:dens}
\sup_{y\in J}\int_0^{1} \frac{|B_{\rho}|}{\int_{B_{\rho}(y)\cap\Omega} f(x)\, dx}\, d\rho
< +\infty.
\end{equation} 
Then $u(t, \cdot)$ converges to $\uhl$ in finite time.
\end{conjecture}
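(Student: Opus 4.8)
The plan is to trap $u(t,\cdot)$ between $\uhl$ and an explicit family of lower barriers, and to close the argument with the Comparison Principle (Proposition~\ref{p:comp}) and the monotone convergence $u(t,\cdot)\to u_f$ of Theorem~\ref{t:conv}. I would start by noting that~\eqref{f:dens} forces $J\subseteq\spt(f)$: if $f$ vanished a.e.\ on $B_r(y)\cap\Omega$ for some $y\in J$ and some $r>0$, the integrand in~\eqref{f:dens} would equal $+\infty$ on $]0,r]$. By Theorem~\ref{t:uf}(iii) and Theorem~\ref{t:conv} this yields $\ul=u_f=\uhl$, so that the conjecture is equivalent to the existence of a finite $\tau$ with $u(\tau,\cdot)=\uhl$ on $\overline\Omega$; by monotonicity in $t$, this then persists for every $t\ge\tau$.

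The second step reduces the claim to a statement on $J$: it is enough to find a finite $\tau$ with $u(\tau,\cdot)=\uhl$ on $J$. Indeed, every $x\in\Omega$ lies on a transport ray $\cray{y',x'}$ with $x'\in J$ (take $y'\in\Pi(x)$ and extend $\cray{y',x}$ to a maximal admissible segment), and along such a ray $\uhl$ is affine of slope one, so $\uhl(x)=\uhl(x')-|x'-x|$; since $u(\tau,\cdot)\in\Xp$ is $1$-Lipschitz,
\[
u(\tau,x)\ge u(\tau,x')-|x'-x|=\uhl(x')-|x'-x|=\uhl(x),
\]
which together with~\eqref{f:maxX} gives $u(\tau,\cdot)=\uhl$ on $\Omega$, hence on $\overline\Omega$ by continuity. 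To get a single $\tau$ serving all $y\in J$ simultaneously, I would actually prove a \emph{pointwise} bound: for each $y\in J$ there is a time $\tau_y$ after which $u(\cdot,y)\equiv\uhl(y)$, with $\sup_{y\in J}\tau_y<+\infty$. The uniformity is precisely what the outer supremum in~\eqref{f:dens} provides; the contribution of the radii $\rho\ge1$ can be absorbed by a crude estimate, using that~\eqref{f:dens} also forces $\inf_{y\in J}\int_{B_1(y)\cap\Omega}f\,dx>0$.

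For the pointwise bound, fix $y\in J$ and consider the barrier
\[
\uu_y(t,x):=\uhl(x)\wedge\Big(0\vee\big(\uhl(y)-2\rho_y(t)+|x-y|\big)\Big),\qquad t\ge0,\ x\in\overline\Omega,
\]
where $\rho_y\colon[0,+\infty[\to[0,\rho_0^*]$, with $\rho_0^*:=\tfrac12\big(\sup_{\overline\Omega}\uhl+\diam\Omega\big)$, solves $\dot\rho=-\tfrac12 M_y(\rho)$, $\rho_y(0)=\rho_0^*$, and $M_y(\rho):=|B_\rho(y)\cap\Omega|^{-1}\int_{B_\rho(y)\cap\Omega}f\,dx$ for $\rho>0$, $M_y(0):=0$. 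The choice of $\rho_0^*$ makes $\uu_y(0,\cdot)\equiv0$ and $\uu_y(t,\cdot)\in\Xp$ for every $t$; by~\eqref{f:dens} one has $M_y(\rho)>0$ for $\rho>0$, and $\rho_y$ vanishes from the finite time $\tau_y=2\int_0^{\rho_0^*}\frac{|B_\rho(y)\cap\Omega|}{\int_{B_\rho(y)\cap\Omega}f\,dx}\,d\rho$ on, whence $\uu_y(\tau_y,y)=\uhl(y)$. If one can produce a source $f_y^*\le f$ for which $\uu_y$ solves the variational inequality~\eqref{f:var2} with null initial datum, then the Comparison Principle with $f_1=f\ge f_2=f_y^*$ and $u_0^1=u_0^2=0$ gives $u(t,\cdot)\ge\uu_y(t,\cdot)$, hence $u(\tau_y,y)\ge\uhl(y)$, i.e.\ $u(t,y)=\uhl(y)$ for $t\ge\tau_y$ — exactly the pointwise bound above.

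The heart of the matter, and the step I expect to be the real obstacle, is the verification that $\uu_y$ is a subsolution. On the set $\{\uu_y=\uhl\}$ this is immediate ($w-\uu_y\le0$ and $f_y^*\ge0$), so everything concentrates on the active set $A(t)$, which near $y$ is $B_{\rho_y(t)}(y)\cap\Omega$, where $\partial_t\uu_y=-2\dot\rho_y=M_y(\rho_y(t))$. In Examples~\ref{e:conv} and~\ref{e:conv2} the inequality $\int_{A(t)}(f-\partial_t\uu_y)(w-\uu_y)\,dx\le0$ came out of the one-sided cone estimate~\eqref{f:disw} together with the fact that, for radial $f$, the function $f-M_y(\rho_y(t))$ is radial about $y$, has a single sign change, and integrates to zero on $A(t)$ — a cancellation that genuinely uses the radial symmetry of both $f$ and $\uhl$ near the apex. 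For a general source this is lost: the true "hole" in the evolving profile is neither a ball nor centred at $y$, its shape being coupled to the distribution of $f$, and replacing $f$ from below by a radial $f_y^*$ destroys precisely the local mass that keeps $\tau_y$ finite. Handling the general case seems to require either a free-boundary analysis of the actual filling region near $y$, or a comparison/rearrangement argument localized at $y$ that is itself nontrivial — which is presumably why the conjecture is proved here only under stronger assumptions on $f$ near $J$ (Theorem~\ref{t:fintime}), where a robust, typically uniformly positive, lower barrier is available.
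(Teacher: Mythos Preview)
The statement you were asked to prove is labelled \emph{Conjecture} in the paper, and the paper does \emph{not} provide a proof of it. Immediately after formulating~\eqref{f:dens}, the authors introduce the stronger density condition~\eqref{f:dens2} and then prove only Theorem~\ref{t:fintime}, which assumes the yet stronger hypothesis that $f\ge\varepsilon$ on a tubular neighbourhood of $J$. So there is no ``paper's own proof'' to compare against.

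Your proposal is an honest account of this situation. The preliminary steps you give are correct and match what the paper does: the observation that~\eqref{f:dens} forces $J\subseteq\spt(f)$ appears verbatim after the conjecture, and the reduction ``it suffices to have $u(\tau,\cdot)=\uhl$ on $J$'' is exactly the closing step of the proof of Theorem~\ref{t:fintime} (there attributed to \cite[Theorem~5.5]{CM11}). Your barrier $\uu_y$ and the ODE for $\rho_y$ are the natural generalisations of the constructions in Examples~\ref{e:conv}--\ref{e:conv2} and in the proof of Theorem~\ref{t:fintime}.

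You correctly identify the genuine gap: verifying that $\uu_y$ (or some $f_y^*$-driven cone) is a subsolution of~\eqref{f:var2} under the mere integrability condition~\eqref{f:dens}. In Examples~\ref{e:conv}--\ref{e:conv2} this worked because of radial symmetry and a single sign change of $f-\partial_t u$ on each sphere, while in Theorem~\ref{t:fintime} it works because $f\ge\varepsilon$ allows the explicit cone-source $f_2$ as a pointwise minorant. Neither mechanism survives under~\eqref{f:dens} alone, and you are right that closing this would require a new idea --- which is precisely why the paper leaves the statement as a conjecture.
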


A condition stronger than \eqref{f:dens}, but perhaps easier to handle, is
the following:
There exist $\alpha \in [0,1[$ and a constant $c>0$ such that
\begin{equation}
\label{f:dens2}
\liminf_{r \searrow 0} \frac{1}{r^{N+\alpha}} \int_{B_r(y)\cap\Omega} f(x)\, dx \geq c,
\qquad
\forall y\in J.
\end{equation}
Clearly both conditions can be stated also in the case $f\in\meas^+(\Omega)$.
For example, \eqref{f:dens2} requires that the lower $(N+\alpha)$-dimensional density
of the measure $f$ restricted on $J$ is bounded from below by a positive constant.

We remark that, if $y\in J$ does not belong to the support of $f$,
then the integrand in \eqref{f:dens} is $+\infty$ for $\rho$ small enough,
hence, \eqref{f:dens} implies in particular that $J\subset\spt(f)$.
By Theorems~\ref{t:conv} and~\ref{t:uf}(iii), we already know that this
inclusion is necessary and sufficient in order to have that $u(t,\cdot)$
converges to $\uhl$ as $t\to +\infty$.

In the following theorem we prove a sufficient condition
in order to have convergence in finite time
to $\uhl$ (see also \cite[Theorem~3.3]{CCS} for a similar condition
in the case $\phi \equiv 0$).
We remark that, under the assumptions of Theorem~\ref{t:fintime} below,
condition~\eqref{f:dens2} is satisfied with $\alpha = 0$ and $c = \varepsilon$.

\begin{theorem}[Convergence in finite time]\label{t:fintime}
Let $J\subset\overline{\Omega}$ be the set of
final points of the transport rays
defined in \eqref{f:J}.
Assume that there exist
$r > 0$ and $\varepsilon \in ]0,r]$ such that
\[
f(x) \geq \varepsilon
\qquad
\forall x\in \Omega \cap \bigcup_{y\in J} B_r(y).
\]
Then $u(t, \cdot)$ converges to $\uhl$ in finite time.
\end{theorem}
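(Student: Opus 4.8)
The plan is: identify the limit, reduce the claim to a pointwise statement at the final points of the transport rays, and then — around each such point — squeeze $u$ from below, via the Comparison Principle, by an explicit profile built in the spirit of Examples~\ref{e:conv} and~\ref{e:conv2}, whose vertex reaches $\uhl$ in a time bounded uniformly over $J$. \emph{Step 1 (the limit equals $\uhl$).} For every $q\in J$ the set $\Omega\cap B_r(q)$ is non‑empty with $q$ in its closure, and $f\geq\varepsilon>0$ there; hence $q\in\spt(f)$, so $J\subseteq\spt(f)$. By Theorem~\ref{t:uf}(iii) this forces $u_f=\uhl$, and then Theorem~\ref{t:conv} gives that $u(t,\cdot)\to\uhl$ monotonically and uniformly on $\overline{\Omega}$. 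Thus it suffices to produce a finite $T$ with $u(T,\cdot)=\uhl$ on $\overline{\Omega}$: since $t\mapsto u(t,\cdot)$ is non‑decreasing and $u\leq\uhl$ (see \eqref{f:maxX}), this yields $u(t,\cdot)=\uhl$ for all $t\geq T$.

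\emph{Step 2 (reduction to the final points).} I claim it is enough to find $T<+\infty$ with $u(T,q)=\uhl(q)$ for every $q\in J$. Fix $x\in\overline{\Omega}$ and a transport ray $\cray{y,q}$ through $x$, with $q\in J$ its final point (every point of $\overline{\Omega}$ lies on such a ray). On $\cray{y,q}$ the function $\uhl$ is affine of slope $1$, so $\uhl(z)=\uhl(q)-|z-q|$ for $z\in\cray{y,q}$; if $u(T,q)=\uhl(q)$, then since $u(T,\cdot)$ is $1$‑Lipschitz and $u(T,\cdot)\leq\uhl$,
\[
\uhl(z)=\uhl(q)-|z-q|\leq u(T,q)-|z-q|\leq u(T,z)\leq\uhl(z),\qquad z\in\cray{y,q},
\]
and in particular $u(T,x)=\uhl(x)$. (Final points $q$ with $\uhl(q)=0$ need nothing, since $0\leq u(t,q)\leq\uhl(q)=0$ for all $t$.)

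\emph{Step 3 (squeezing from below near each final point).} Fix $q\in J$. Using the uniform convergence of Step~1, pick $t_*$ with $\theta:=\max_{\overline{\Omega}}(\uhl-u(t_*,\cdot))\leq 2r$. Restart the evolution at $t_*$ (legitimate by the uniqueness in Theorem~\ref{t:uniq}) and, to make the local picture autonomous, replace $f$ by the smaller source $\varepsilon\,\chi_{\Omega\cap B_r(q)}\leq f$: by Proposition~\ref{p:comp}, $u(t_*+t,\cdot)\geq\underline u_q(t,\cdot)$, where $\underline u_q$ solves \eqref{f:var2} with that source and datum $w_0:=(\uhl(q)-\theta-|\cdot-q|)^+$, which belongs to $\Xp$ and satisfies $w_0\leq u(t_*,\cdot)$ (because $u(t_*,x)\geq\max\{0,\,u(t_*,q)-|x-q|\}\geq\max\{0,\,\uhl(q)-\theta-|x-q|\}$). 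By Remark~\ref{r:conv}, $\underline u_q(t,\cdot)\to w_0\vee u_{\varepsilon\chi_{\Omega\cap B_r(q)}}$, and $u_{\varepsilon\chi_{\Omega\cap B_r(q)}}=\uhl$ on $\Omega\cap B_r(q)$ (take $z=x$ in the definition of $u_f$), so $\underline u_q(t,q)\to\uhl(q)$; the task is to show this happens in a time bounded uniformly in $q$. To that end one constructs, following the model computations of Examples~\ref{e:conv} and~\ref{e:conv2}, an explicit radial sub‑solution of this $q$‑localized problem — a profile carrying a ``crater'' of radius $\rho(t)$ centred at $q$, with $\rho$ governed by an ODE of the form $\dot\rho=-\tfrac12\,c_\Omega\,\varepsilon$, $\rho(0)=\theta/2$, where
\[
c_\Omega:=\inf\bigl\{\,|B_s(q')\cap\Omega|/|B_s|\ :\ q'\in\overline{\Omega},\ 0<s\leq\diam\Omega\,\bigr\}>0
\]
is positive because a bounded convex set satisfies a uniform cone condition. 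Since $\rho(t)\leq\theta/2\leq r$ the crater stays inside $B_r(q)$, so the sub‑solution's source can be kept $\leq\varepsilon\,\chi_{\Omega\cap B_r(q)}$, and the verification that the profile solves \eqref{f:var2} is the splitting of $\int_\Omega(\text{source}-\partial_t)(w-\text{profile})\,dx$ into the crater and its complement exactly as in Example~\ref{e:conv}; its vertex reaches $\uhl(q)$ at $T_1:=\theta/(c_\Omega\varepsilon)\leq 2r/(c_\Omega\varepsilon)$, a bound independent of $q$. Hence $u(t_*+T_1,q)=\uhl(q)$ for every $q\in J$ simultaneously.

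Combining Steps~2 and~3 with $T:=t_*+2r/(c_\Omega\varepsilon)$ proves the theorem. The heart of the argument — and its main difficulty — is the construction in Step~3 of an explicit sub‑solution that simultaneously (i) lies below the restarted solution, which bounds it from above only by the crude cone $w_0$, and (ii) still rises, in a controlled finite time, to the value $\uhl(q)$ at the vertex $q$: reconciling (i) and (ii) is delicate, since the local shape of $\uhl$ at a final point depends on the geometry of $J$, and it is here that the convexity of $\Omega$ is essential — to make the transport‑ray geometry and the volume lower bound $c_\Omega>0$ uniform over $J$, hence $T_1$ uniform. This is, for general $\phi$, the counterpart of the barrier estimate of \cite[Theorem~3.3]{CCS}.
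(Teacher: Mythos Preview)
Your Steps~1 and~2 are correct and match the paper's strategy: the paper also reduces to showing $u(T,q)=\uhl(q)$ for all $q\in J$ at a uniform time, invoking \cite[Theorem~5.5]{CM11} for the passage from $J$ to all of $\overline\Omega$ (your Step~2 reproves this).

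The gap is in Step~3. A ``crater'' profile in the sense of Examples~\ref{e:conv}--\ref{e:conv2} cannot serve as a sub-solution of your $q$-localized problem. Whatever the precise form, such a profile must equal $\uhl(q)-\rho(t)$ on the crater rim $\{|x-q|=\rho(t)\}$ and be at least that large on the outer slope; at $t=0$ (with $\rho(0)=\theta/2$) the rim value is $\uhl(q)-\theta/2$. But your declared initial datum $w_0=(\uhl(q)-\theta-|\cdot-q|)^+$ has maximum $\uhl(q)-\theta$, so the crater profile is \emph{not} $\leq w_0$, and Proposition~\ref{p:comp} does not apply. Bypassing $w_0$ and comparing directly with $u(t_*,\cdot)$ fails for the same reason: from $u(t_*,\cdot)\geq\uhl-\theta$ and the $1$-Lipschitz bound you only get $u(t_*,x)\geq\max\{\uhl(x)-\theta,\,\uhl(q)-\theta-|x-q|\}$, and neither dominates $\uhl(q)-|x-q|$ near the rim. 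Relatedly, the verification of~\eqref{f:var2} in Example~\ref{e:conv2} uses that the profile equals $\uhl$ outside the crater (so that $\int f(w-\uhl)\leq 0$ there); here the outer part of any admissible sub-solution is strictly below $\uhl$, and that integral need not have a sign. Thus both the placement of the sub-solution below the evolution and the claimed linear ODE $\dot\rho=-\tfrac12 c_\Omega\varepsilon$ are unjustified.

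The paper's construction is different and avoids this obstruction: instead of a shrinking crater it uses a \emph{growing cone} $\uu(t,x)=0\vee(\alpha(t)-|x-q|)$, built as the solution of~\eqref{f:var2} from initial datum $0$ with a time-dependent source $f_2\leq f$ (a truncated cone of height $\varepsilon r$ on $[0,r/\varepsilon]$, then $\varepsilon\chi_{B_r(q)}$). Mass balance gives $\dot\alpha=\varepsilon r^N/\alpha^N$, hence $\alpha(t)\sim t^{1/(N+1)}$ (not linear), and the cone is automatically $\leq\uhl$ until the first time $T(q)$ at which $\alpha(T(q))=\uhl(q)$. This yields the uniform bound $T(q)\leq[\uhl(q)^{N+1}+Nr^{N+1}]/[(N+1)\varepsilon r^N]$, with the interior-cone constant entering only when $B_r(q)\not\subset\Omega$. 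The waiting time $t_*$ of your Step~1 is unnecessary. If you wish to salvage your approach, replace the crater by this growing-cone comparison (you may start it at height $\uhl(q)-\theta$ rather than $0$, which only improves the bound), and discard the linear ODE.
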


\begin{proof}
Let $y \in J\cap\Omega$, and assume that $B_r(y) \subset\Omega$.
Consider the time-dependent source
\[
f_2(t,x) =
\begin{cases}
\varepsilon(0\vee (r - |x-y|)),
&\text{if}\ t \in [0, r/\varepsilon],
\\
\varepsilon \, \chi_{B_r(y)},
&\text{if}\ t > r/\varepsilon,
\end{cases}
\]
and let $u_2$ be the solution to~\eqref{f:var2},
with $f = f_2$ and $u_2(0,\cdot) = 0$.
Let us define
\[
\alpha(t) := u_2(t,y),
\quad
\uu(t,x) := 0 \vee (\alpha(t) - |x-y|),
\qquad x\in\Omega,\ 
t\geq 0.
\]
By Theorem \ref{t:uniq}, $\alpha$ is a continuous and non-decreasing function.
Let $\overline{t} \geq r/\varepsilon$ be the first time
such that $\overline{B}_{\alpha(t)}(y) \cap \partial\Omega\neq \emptyset$.
For $t\in [0,\overline{t}]$ we can check that 
$u_2(t,x) = \uu(t,x)$
and
\begin{equation}
\label{f:alpha}
\alpha(t) =
\begin{cases}
\varepsilon\, t,
& \text{if}\ t\in [0, r/\varepsilon],
\\
\overline{\alpha}(t)\,,
& \text{if}\ t\in [r/\varepsilon, \overline{t}],
\end{cases}
\end{equation}
with
\begin{equation}
\label{f:abar}
\overline{\alpha}(t) := \left[
r^{N+1} + (N+1)\varepsilon r^N \left(t - \frac{r}{\varepsilon}\right)
\right]^\frac{1}{N+1}\,,
\qquad t\geq 0
\end{equation}
(see the proof of Theorem~3.3 in \cite{CCS} for details).
Specifically, the representation \eqref{f:abar}, once we take for granted that
$u_2 = \uu$ for $t\in [0,\overline t]$, can be obtained as follows. 
The height $\alpha(t)$ of the cone can be computed 
taking into account that no sand can fall down the table before time $\overline{t}$,
so that the following balance of mass must hold:
\begin{equation}
\label{f:mass}
\int_\Omega f_2(t,x)\, dx = \int_\Omega \partial_t u_2(t,x)\, dx.
\end{equation}
A straightforward computation gives
\[
\int_\Omega f_2(t,x)\, dx =
\begin{cases}
\dfrac{\omega_N \varepsilon\, r^N}{N+1}\,,
& \text{if}\ t\in [0, r/\varepsilon],
\\[6pt]
\omega_N \varepsilon\, r^N
& \text{if}\ t\in [r/\varepsilon, \overline{t}],
\end{cases}
\qquad
\int_\Omega \partial_t u_2(t,x)\, dx
= 
\begin{cases}
\dfrac{\omega_N r^N\, \dot{\alpha}(t)}{N+1}\,,
& \text{if}\ t\in [0, r/\varepsilon],
\\[6pt]
\omega_N \alpha(t)^N\, \dot{\alpha}(t)\,,
& \text{if}\ t\in [r/\varepsilon, \overline{t}],
\end{cases}
\]
so that \eqref{f:alpha} follows observing that the function $\overline{\alpha}$,
defined in~\eqref{f:abar}, is the solution to
the Cauchy problem
\begin{equation}
\label{f:cpalpha}
\begin{cases}
\dfrac{d}{dt} \overline{\alpha}(t) = \dfrac{\varepsilon\, r^N}{\alpha(t)^N},
& t\geq 0,
\\
\overline{\alpha}(r/\varepsilon) = r\,.
\end{cases}
\end{equation}

For $T>\overline{t}$ large enough,
let $\nu\in L^\infty(0,T;\meas^+(\partial\Omega))$ be a measure
associated with $u_2$ through Theorem~\ref{t:incl}(ii), and let
\[
T(y) := \sup\{t\geq\overline{t}\colon \nu_t = 0\}\,.
\]
By \cite[Theorem~5.4]{DPJ}
we have that $u_2(t,x) = \uu(t,x)$ for $t\in [0, T(y)]$.
(To be precise, Theorem~5.4 in \cite{DPJ} has been proved in the case of a source $f_2$ 
which is a Dirac delta in $y$ but, for $t \leq T(y)$, that solution coincides with $u_2$.)  
In particular, there exists $z\in\partial\Omega$ such that
$\alpha(T(y)) - |z-y| = \phi(z)$.
Let us define 
\[
A(t) := \{x\in\Omega\colon \alpha(t) - |x-y| > 0\}
\subseteq B_{\alpha(t)}(y),
\qquad t\geq 0.
\]
Since 
the balance of mass~\eqref{f:mass} holds also for $t\in [\overline{t}, T(y)]$,
we deduce that
\[
\dot{\alpha}(t) = \dfrac{\omega_N \varepsilon\, r^N}{|A(t)|}\,,
\qquad t\in [r/\varepsilon, T(y)].
\]
Observing that $|A(t)| \leq |B_{\alpha(t)}| = \omega_N \alpha(t)^N$,
by comparison with the solution $\overline{\alpha}$ of~\eqref{f:cpalpha}
we conclude that
\[
\alpha(t) \geq \overline{\alpha}(t),
\qquad \forall t\in [r/\varepsilon, T(y)].
\]
We claim that $u_2(T(y), y) = \uhl(y)$. Namely,
\[
u_2(T(y), y)=\alpha(T(y)) 
= \phi(z) + |z-y| \geq \uhl(y),
\]
so that the claim follows from the maximality of $\uhl$.

From the relation
\[
\uhl(y) = \alpha(T(y)) \geq \overline{\alpha}(T(y))
\]
we conclude that
\[
T(y) \leq \frac{\uhl(y)^{N+1} + N\, r^{N+1}}{(N+1)\varepsilon\, r^N} \leq 
\tau :=
\dfrac{[\min_{\partial\Omega}\phi + \diam(\Omega)]^{N+1} + N\, r^{N+1}}{(N+1) \varepsilon\, r^N}\,.
\]
Finally, since $f\geq \varepsilon$ in a tubular  neighborhood of $J$, then  $f\geq f_2$ and hence,
by the comparison principle and 
the fact that $u(T(y), y) = \uhl(y)$,
we get that $u(t, y) = \uhl(y)$ for every $t\geq \tau$.

If $y\in J$ but $B_r(y)$ is not contained in $\Omega$,
we can modify the above proof taking into account that
$\Omega$ satisfy a uniform interior cone condition.
This implies that there exists a positive constant $\gamma$
such that $|B_r(y) \cap \Omega| \geq \gamma |B_r(y)|$
for every $y\in \overline{\Omega}$,
and it can be proved that 
$\alpha(t) := u_2(t,y) \geq \gamma\, \overline{\alpha}(t)$
for $t \in [r/\varepsilon, T(y)]$,
so that we can obtain a uniform estimate from above on the time
$T(y)$ defined above.

Hence there exists a time $\tau' \geq \tau$ such that
$u(t, y) = \uhl(y)$ for every $t\geq \tau'$ and for
every $y\in J$,
so that the conclusion follows from \cite[Theorem~5.5]{CM11}.
\end{proof}

\section{Stationary solutions}\label{s:tre}

In this section we study the stationary solutions
of~\eqref{f:P}, i.e., the solutions of
\begin{equation}
\label{f:Ps}
\begin{cases}
- \dive(v\, \nabla u) = f - \nu
&\text{in}\ \R^N,
\\
|\nabla u| \leq 1,\ v\geq 0, \ (1-|\nabla u|) v = 0,
&\text{a.e.\ in}\ \Omega,
\\
0\leq u \leq \phi
&\text{in}\ \partial\Omega,
\\
u(x) = \phi(x)
& \text{$\nu$-a.e.\  $x\in\partial\Omega$},
\end{cases}
\end{equation}
where $f\in L^1_+(\Omega)$ and
$\phi\colon\partial\Omega\to [0,+\infty[$ are as
in the previous sections.
More precisely, $(u,v,\nu)$ is called a stationary solution to~\eqref{f:Ps}
if $u\in \Xp$, $v\in L^1_+(\Omega)$, $\nu\in\meas^+(\partial\Omega)$
satisfy
$(1-|\nabla u|) v = 0$ a.e.\ in $\Omega$, $u=\phi$ $\nu$-a.e.\ on $\partial \Omega$, and
\begin{equation}
\label{f:weaknu}
\int_\Omega
v\, \nabla u \cdot \nabla\psi\, dx
= \int_\Omega f\, \psi\, dx
- \int_{\partial\Omega} \psi\, d\nu,
	\qquad
	\forall\psi\in C^\infty_c(\R^N).
\end{equation}

We first recall the stationary version of Theorem~\ref{t:incl},
which has been proved in \cite[Theorem~3.2]{DPJ}.

\begin{theorem}\label{t:incls}
The following equivalence holds.
\begin{itemize}
\item[(i)]
If $(u,v,\nu)\in \Xp\times L^1_+(\Omega)\times \meas^+(\partial\Omega)$ 
is a solution of \eqref{f:Ps}, then 
	\begin{equation}
		\label{f:var3}
\int_\Omega f(x) \left(
w(x)-u(x) \right) \, dx\leq 0 \, \qquad \forall w\in\Xp.
	\end{equation}

\item[(ii)]
If $u\in \Xp$ satisfies
the maximality condition~\eqref{f:var3},
then there exists $(v,\nu)\in L^1_+(\Omega)\times\meas^+(\partial\Omega)$ 
such that $(u,v,\nu)$ is a solution to~\eqref{f:Ps}.
\end{itemize}
\end{theorem}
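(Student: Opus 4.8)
The plan is to reduce the stationary statement to the evolutionary one (Theorem~\ref{t:incl}) by exploiting the asymptotic analysis of Section~\ref{s:due}, and in particular Theorem~\ref{t:conv} together with its extension in Remark~\ref{r:conv}. For part~(i), suppose $(u,v,\nu)$ solves~\eqref{f:Ps}. Then $u$ is time-independent, so $\partial_t u \equiv 0$ and~\eqref{f:weaknu} is exactly the weak formulation (S6) with $u(t,\cdot)\equiv u$, $v(t,\cdot)\equiv v$, $\nu_t\equiv\nu$; hence $(u,v,\nu)$ (viewed as constant in $t$) is a solution of~\eqref{f:P} in the sense (S1)--(S6) with initial datum $u_0=u\in\Xp$. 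Applying Theorem~\ref{t:incl}(i) (whose proof, via Theorem~\ref{t:uf} and the comparison principle, is valid for arbitrary initial data $u_0\in\Xp$, not only $u_0=0$ — this is the content of Remark~\ref{r:conv}) with $\partial_t u=0$ gives precisely~\eqref{f:var3}. Alternatively, and more directly, one can pair~\eqref{f:weaknu} against $\psi$ approximating $w-u$ for $w\in\Xp$: writing $\varphi=w-u$, the left-hand side becomes $\int_\Omega v\,\nabla u\cdot\nabla(w-u)\,dx$, and using $(1-|\nabla u|)v=0$ together with $\nabla u\cdot\nabla w\le|\nabla u|\,|\nabla w|\le|\nabla u|$ one gets $v\,\nabla u\cdot\nabla w\le v\,|\nabla u|^2=v\,\nabla u\cdot\nabla u$, so the left side is $\le 0$; the boundary term $-\int_{\partial\Omega}(w-u)\,d\nu=-\int_{\partial\Omega}(w-\phi)\,d\nu\ge 0$ since $w\le\phi$ on $\partial\Omega$ and $u=\phi$ $\nu$-a.e.; rearranging yields $\int_\Omega f\,(w-u)\,dx\le 0$. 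The care needed is the density/approximation argument allowing $w-u\in\lip_1$ as a test object in~\eqref{f:weaknu}, which follows by mollification since $v\in L^1(\Omega)$ and $\nabla u\in L^\infty$.

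For part~(ii), given $u\in\Xp$ satisfying~\eqref{f:var3}, the idea is to run the evolutionary problem with this $u$ as the stationary profile. Concretely, $u$ satisfies the evolutionary maximality condition~\eqref{f:var2} with $\partial_t u\equiv 0$: indeed~\eqref{f:var2} with the zero time derivative is literally~\eqref{f:var3}. Taking $u_0=u$, the constant-in-time function $t\mapsto u$ satisfies (S1) and (S4) and the maximality condition~\eqref{f:var2}, so Theorem~\ref{t:incl}(ii) produces $(v,\nu)$ with $v\in L^\infty(0,T;L^1_+(\Omega))$, $\nu\in L^\infty(0,T;\meas^+(\partial\Omega))$ such that $(u,v,\nu)$ solves~\eqref{f:P}. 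It remains to extract a time-independent pair: by Theorem~\ref{t:exidp}(ii) the measure $\nu_t$ corresponds uniquely to $v(t,\cdot)$, and since $u$ is constant one expects $v,\nu$ can be chosen constant as well — e.g.\ by averaging in time, $\bar v:=\frac1T\int_0^T v(t,\cdot)\,dt$ and $\bar\nu:=\frac1T\int_0^T\nu_t\,dt$, which still satisfy~\eqref{f:weaknu} with $\partial_t u=0$, still satisfy $\bar v\ge0$, $\bar\nu\ge0$, still have $\bar\nu$ supported on $\Gamma_f$ (hence on $\{u=\phi\}$, since $u=u_f=\phi$ on $\Gamma_f$ by Theorem~\ref{t:uf}(i)); and the constraint $(1-|\nabla u|)\bar v=0$ a.e.\ is preserved because $(1-|\nabla u(x)|)v(t,x)=0$ for a.e.\ $(t,x)$ and $|\nabla u|$ does not depend on $t$, so the set $\{|\nabla u|<1\}$ on which $v(t,\cdot)$ must vanish is the same for every $t$.

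The main obstacle is this last bookkeeping in part~(ii): one must verify that the time-averaging (or, alternatively, a fixed-time slice $v(t_0,\cdot)$, $\nu_{t_0}$ for a.e.\ $t_0$) genuinely yields a pair in $L^1_+(\Omega)\times\meas^+(\partial\Omega)$ satisfying all four conditions of~\eqref{f:Ps}, in particular that the weak identity~\eqref{f:weaknu} holds for the averaged objects and that no mass is lost at the boundary. The constraint $(1-|\nabla u|)v=0$ is the delicate point conceptually, but as noted it transfers cleanly precisely because $u$ is stationary, so $|\nabla u|$ factors out of the time integration. The uniqueness part of Theorem~\ref{t:exidp}(ii) guarantees consistency of the construction. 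With these observations the reduction to Theorem~\ref{t:incl} is complete; alternatively, one may cite~\cite[Theorem~3.2]{DPJ} directly, our argument being a self-contained rederivation via the evolutionary machinery already set up.
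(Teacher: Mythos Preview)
The paper does not prove Theorem~\ref{t:incls}; it simply recalls it from \cite[Theorem~3.2]{DPJ}. So there is no in-paper argument to compare against, and your proposal functions as a self-contained rederivation.

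Your direct argument for part~(i) is correct and is in fact the standard one: testing~\eqref{f:weaknu} with (a smooth approximation of) $w-u$, using $(1-|\nabla u|)v=0$ and $|\nabla w|\le 1$ to bound the left-hand side by zero, and using $w\le\phi=u$ $\nu$-a.e.\ on $\partial\Omega$ to control the boundary term. The preliminary reduction to Theorem~\ref{t:incl}(i) is unnecessary, and your parenthetical justification for it (``via Theorem~\ref{t:uf} and the comparison principle\ldots this is the content of Remark~\ref{r:conv}'') is confused: those ingredients pertain to Theorem~\ref{t:conv}, not to Theorem~\ref{t:incl}(i). But since the direct argument stands on its own, this does not affect correctness.

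For part~(ii) your strategy---view $u$ as constant-in-time, invoke Theorem~\ref{t:incl}(ii), then time-average---is sound, with two caveats. First, Theorem~\ref{t:incl}(ii) as stated in the paper requires (S4), i.e.\ $u(0,\cdot)=0$; you are tacitly using the version with arbitrary initial datum $u_0\in\Xp$. This extension is true (it is how \cite{DPJ} is written), but it is not the statement recorded here, so you should flag it explicitly rather than absorb it into a parenthetical. Second, your justification that $u=\phi$ on the support of $\bar\nu$ via ``$u=u_f=\phi$ on $\Gamma_f$ by Theorem~\ref{t:uf}(i)'' is off: Theorem~\ref{t:uf}(i) concerns $u_f$, not your $u$, and in general $u\neq u_f$ (cf.\ Theorem~\ref{t:cm11}(ii)). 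The clean route is to note that (S5) already gives $u=\phi$ $\nu_t$-a.e.\ for a.e.\ $t$, and since $u$ is $t$-independent this passes directly to the average $\bar\nu$. With these two points tightened, the argument is complete.
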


We now prove that
problem~\eqref{f:Ps} can be reformulated without making any reference
to the measure $\nu\in\mathcal{M}^+(\partial\Omega)$.

\begin{theorem}[Equivalent formulation for stationary solutions]
\label{t:equiv}
Let $\Omega, \phi, f$ satisfy (D1), (D2), (D3), let
$\Gamma_f\subset\partial\Omega$ be the set defined in~\eqref{f:Gf}, and 
let $\Xf\subset W^{1,\infty}(\Omega)$
be the set defined in \eqref{f:Xf}.
Then the following equivalence holds.
\begin{itemize}
\item[(i)]
If $(u,v,\nu)\in \Xp\times L^1_+(\Omega)\times \meas^+(\partial\Omega)$
is a solution to~\eqref{f:Ps},
then $(u,v)$ is a solution to 
\begin{equation}
\label{f:Ps2}
\begin{cases}
- \dive(v\, \nabla u) = f
&\text{in}\ \Omega,
\\
|\nabla u| \leq 1,\ v\geq 0, \ (1-|\nabla u|) v = 0,
&\text{a.e.\ in}\ \Omega,
\\
0\leq u \leq \phi
&\text{in}\ \partial\Omega,
\\
u(x) = \phi(x)
& \text{on}\ \Gamma_f,
\end{cases}
\end{equation}
meaning that $(u,v)\in \Xp[f]\times L^1_+(\Omega)$ satisfies
$(1-|\nabla u|) v = 0$ a.e.\ in $\Omega$ and
\begin{equation}
\label{f:weak}
\int_\Omega v\, \nabla u \cdot \nabla\psi\, dx
= \int_\Omega f\psi\, dx,
\qquad
\forall \psi \in C^\infty_c (\R^N \setminus \Gamma_f).
\end{equation}

\item[(ii)]
If $(u,v)\in \Xp[f]\times L^1_+(\Omega)$ is a solution to~\eqref{f:Ps2},
then there exists a measure $\nu\in\meas^+(\partial\Omega)$ such
that $(u,v,\nu)$ is a solution to~\eqref{f:Ps}.
\end{itemize}
\end{theorem}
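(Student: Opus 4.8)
The plan is to prove the two implications by carefully analyzing where the boundary measure $\nu$ is supported and how the test-function class changes between~\eqref{f:weaknu} and~\eqref{f:weak}. The key structural fact, already available from Theorem~\ref{t:exidp}(ii) and the stationary analysis in \cite{CM11,DPJ}, is that any admissible $\nu$ in~\eqref{f:Ps} must be supported on $\Gamma_f$; I would first record this, since it is the hinge of part~(i). Indeed, given a solution $(u,v,\nu)$ of~\eqref{f:Ps}, Theorem~\ref{t:incls}(i) tells us $u$ satisfies the maximality condition~\eqref{f:var3}, so $u\in\Xp$ is a maximizer of $w\mapsto\int_\Omega f(w-u)\,dx$; combining this with Theorem~\ref{t:uf} one gets $u=\uhl$ on $\spt(f)$ and then (as in the proof of Theorem~\ref{t:conv}) $u_f\le u\le\uhl$, which forces $u=\phi$ on $\Gamma_f$ by definition of $u_f$ and~\eqref{f:Gf}. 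Hence $u\in\Xf$. That $\nu$ is concentrated on $\Gamma_f$ is the stationary counterpart of Theorem~\ref{t:exidp}(ii) and I would cite the relevant statement from \cite{CM11} or \cite{DPJ} rather than reprove it.

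For part~(i), once $u\in\Xf$ and $\spt(\nu)\subseteq\Gamma_f$ are in hand, the verification of~\eqref{f:weak} is essentially a restriction of test functions: if $\psi\in C^\infty_c(\R^N\setminus\Gamma_f)$ then $\psi$ vanishes on a neighborhood of $\Gamma_f\supseteq\spt(\nu)$, so $\int_{\partial\Omega}\psi\,d\nu=0$, and~\eqref{f:weaknu} collapses to~\eqref{f:weak}. The only genuine subtlety is that~\eqref{f:weak} is stated for test functions on $\R^N\setminus\Gamma_f$, which need not have compact support away from $\partial\Omega\setminus\Gamma_f$; one must check the divergence identity $-\dive(v\nabla u)=f$ still holds in $\mathcal D'(\R^N\setminus\Gamma_f)$, using that $v\nabla u$ is an $L^1$ vector field supported in $\overline\Omega$ and that on $\partial\Omega\setminus\Gamma_f$ no mass is discharged. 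I expect this to go through by approximating a general $\psi\in C^\infty_c(\R^N\setminus\Gamma_f)$ by functions supported in $\R^N\setminus\partial\Omega$ plus a correction that only sees $\partial\Omega\setminus\Gamma_f$, on which the original measure $\nu$ vanishes.

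For part~(ii), starting from $(u,v)\in\Xf\times L^1_+(\Omega)$ solving~\eqref{f:Ps2}, the task is to \emph{produce} a nonnegative measure $\nu$ on $\partial\Omega$ realizing~\eqref{f:weaknu}. The natural definition is
\[
\langle\nu,\psi\rangle := \int_\Omega f\psi\,dx - \int_\Omega v\,\nabla u\cdot\nabla\psi\,dx,
\qquad \psi\in C^\infty_c(\R^N);
\]
by~\eqref{f:weak} this functional annihilates every $\psi$ supported in $\R^N\setminus\Gamma_f$, so the distribution it defines is supported on $\Gamma_f\subseteq\partial\Omega$, and one must argue it is represented by a bounded measure (bounding $\langle\nu,\psi\rangle$ by $\|\psi\|_\infty$ using $\|f\|_{L^1}$ and an integration by parts that moves the derivative off $\psi$ onto $v\nabla u$, exploiting $|\nabla u|\le 1$) and that it is nonnegative (testing against $\psi\ge 0$ that are $1$ near $\Gamma_f$ and using $u\le\phi$, $u=\phi$ on $\Gamma_f$ together with the obstacle-type maximality of $u$). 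The main obstacle is precisely this last point — establishing both the finiteness and the sign of $\nu$ — since it requires translating the pointwise constraint $u=\phi$ on $\Gamma_f$ and $0\le u\le\phi$ on $\partial\Omega$ into a statement about the boundary distribution of the flux $v\nabla u$; here I would lean on the transport-density structure and the results of \cite{CM11} that characterize $(u,v)$ via optimal transport, from which the nonnegativity of the boundary flux on $\Gamma_f$ is inherited. Once $\nu\in\meas^+(\partial\Omega)$ is obtained, identity~\eqref{f:weaknu} holds by construction, $u=\phi$ holds $\nu$-a.e.\ because $\spt\nu\subseteq\Gamma_f$ and $u=\phi$ on $\Gamma_f$, and the remaining constraints are already part of the hypothesis, so $(u,v,\nu)$ solves~\eqref{f:Ps}.
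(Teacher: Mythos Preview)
Your treatment of part~(i) is essentially the paper's argument: use the maximality condition~\eqref{f:var3} (via Theorem~\ref{t:incls}(i)) to get $u=\uhl$ on $\spt(f)$, deduce $u=\phi$ on $\Gamma_f$, and use $\spt(\nu)\subseteq\Gamma_f$ to pass from~\eqref{f:weaknu} to~\eqref{f:weak}. Your worry about test functions is unnecessary: $C^\infty_c(\R^N\setminus\Gamma_f)\subset C^\infty_c(\R^N)$, so~\eqref{f:weak} is obtained from~\eqref{f:weaknu} by \emph{restricting} the test class, and the boundary term drops out because $\psi$ vanishes on $\spt(\nu)$. No approximation is needed.

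For part~(ii) you take a genuinely different route from the paper, and your sketch has a gap. The paper does \emph{not} construct $\nu$ directly. Instead it argues as follows: by Theorem~\ref{t:cm11}(ii) one has $v=v_f$ and $u_f\le u\le\uhl$; by Theorem~\ref{t:uf}(i) this forces $u=\uhl$ on $\spt(f)$, so $\int_\Omega f(w-u)\,dx=\int_{\spt(f)} f(w-\uhl)\,dx\le 0$ for every $w\in\Xp$, i.e.\ the maximality condition~\eqref{f:var3} holds; then Theorem~\ref{t:incls}(ii) supplies a pair $(v',\nu)$ with $(u,v',\nu)$ solving~\eqref{f:Ps}, and uniqueness of the $v$-component (Theorem~\ref{t:cm11}(ii) again, via part~(i)) gives $v'=v_f=v$. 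This is a clean black-box reduction to the duality result of \cite{DPJ}.

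Your direct construction $\langle\nu,\psi\rangle:=\int_\Omega f\psi-\int_\Omega v\,\nabla u\cdot\nabla\psi$ defines a priori only a first-order distribution supported on $\Gamma_f$; the proposed bound ``$\|\psi\|_\infty$ via integration by parts moving the derivative off $\psi$'' is circular, since that integration by parts is precisely the statement that $\dive(v\nabla u)$ is a measure, which is what you are trying to prove. One can rescue the argument by invoking Theorem~\ref{t:cm11}(ii) to identify $v=v_f$ and then showing $v\nabla u=v_f\nabla\uhl$ a.e.\ (because $u=\uhl$ along every transport ray meeting $\spt(f)$, where $v_f$ lives), reducing to the known boundary flux of the explicit solution $(\uhl,v_f)$; but this is more work than the paper's indirect route and is not what your sketch actually provides.
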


Before proving Theorem~\ref{t:equiv},
we recall that
the problem of existence and uniqueness of solutions
to~\eqref{f:Ps2} has been analyzed in detail in
\cite{CM11}, in the more general case of a convex constraint
on $\nabla u$ and for a non-convex domain $\Omega$.

The main results in \cite{CM11} relevant to our problem can be summarized
in the following theorem.

\begin{theorem}\label{t:cm11}
Under the assumptions of Theorem \ref{t:equiv}, the following holds.
\begin{itemize}
\item[(i)]
[Existence] There exists a unique $v_f\in L^1_+(\Omega)$ such that
$(\uhl, v_f)$ is a solution to \eqref{f:Ps2}.

\item[(ii)]
[Uniqueness of $v$ and characterization of $u$]
A pair $(u,v)\in \Xf\times L^1_+(\Omega)$ is a solution
to \eqref{f:Ps2} if and only if $v = v_f$ and
$u_f\leq u \leq \uhl$.

\item[(iii)]
[Uniqueness]
$(\uhl, v_f)$ is the unique solution to \eqref{f:Ps2}
if and only if $J\subseteq \spt(f)$,
where $J\subset\overline{\Omega}$ is the set of
final points of the transport rays
defined in \eqref{f:J}.
\end{itemize}
\end{theorem}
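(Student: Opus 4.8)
The plan is to reduce the whole statement to the structure of the transport rays induced by $\uhl$ and to the theory of transport densities for the Monge--Kantorovich problem, which is precisely the machinery developed in \cite{CM11}. The starting observation is that, since $|\nabla\uhl|=1$ a.e., the complementarity constraint $(1-|\nabla u|)v=0$ is automatically satisfied by $u=\uhl$ for any $v$, so part (i) reduces to producing a non-negative $L^1$ field $v_f$ with $-\dive(v_f\,\nabla\uhl)=f$ tested against $C^\infty_c(\R^N\setminus\Gamma_f)$ as in \eqref{f:weak}. I would construct $v_f$ as the transport density of the optimal transport of $f$ onto a measure supported on $\Gamma_f$, with $\uhl$ playing the role of the Kantorovich potential; its existence as a finite measure and, crucially, its absolute continuity with an $L^1$ density, follow from the transport-density regularity results of \cite{DPP2002,San}. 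Uniqueness of $v_f$ \emph{once $u=\uhl$ is fixed} I would obtain by disintegrating the weak equation \eqref{f:weak} along the transport rays: on each ray the equation collapses to a one-dimensional continuity equation whose solution, with the data prescribed at the two endpoints, is uniquely determined.

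For (ii) I would argue both implications through this ray picture. For the ``if'' direction, suppose $u\in\Xf$ with $u_f\leq u\leq\uhl$. Since $u_f=\uhl$ on $\spt(f)$ by Theorem~\ref{t:uf}(i), the sandwich forces $u=\uhl$ on $\spt(f)$; propagating this along each mass-carrying transport ray, using that $u$ is $1$-Lipschitz, that $\uhl$ is affine with slope one on the ray, and that $u=\phi=\uhl$ at the discharge endpoint $y\in\Gamma_f$, I would conclude $u=\uhl$ on the whole portion of the ray where $v_f>0$. Hence $\nabla u=\nabla\uhl$ on $\spt(v_f)$, so $(1-|\nabla u|)v_f=0$ and the divergence equation is unchanged; thus $(u,v_f)$ solves \eqref{f:Ps2}. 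For the converse, given a solution $(u,v)$, the bound $u\leq\uhl$ is the maximality \eqref{f:maxX}; via Theorem~\ref{t:equiv}(ii) and Theorem~\ref{t:incls}(i) the pair satisfies the stationary variational inequality \eqref{f:var3}, and testing it with $w=\uhl$ gives $\int_\Omega f(\uhl-u)\,dx\le 0$, whence $u=\uhl$ on $\spt(f)$ because $f\ge0$ and $u\le\uhl$. Then $u=u_f$ on $\spt(f)$, Theorem~\ref{t:uf}(ii) yields $u_f\le u\le\uhl$, and finally $v=v_f$ follows from the same ray-by-ray uniqueness used in (i), since $u=\uhl$ wherever $v$ or $v_f$ is positive.

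Part (iii) is then essentially a corollary. If $J\subseteq\spt(f)$ then $u_f=\uhl$ by Theorem~\ref{t:uf}(iii), so the characterization in (ii) collapses the admissible range to the single function $u=\uhl$, giving uniqueness of $(\uhl,v_f)$. Conversely, if $J\not\subseteq\spt(f)$ then $u_f\not\equiv\uhl$, and (ii) shows that $(u_f,v_f)$ is a solution distinct from $(\uhl,v_f)$, so uniqueness fails.

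I expect the genuine difficulty to lie entirely in the transport-ray analysis underpinning (i): establishing that the rays form a measurable partition of $\Omega$, that the source and the transport density disintegrate along them, that the resulting one-dimensional problems are well posed, and that $v_f$ is honestly in $L^1_+(\Omega)$ rather than merely a measure. These are the delicate points proved in \cite{CM11}, resting on the transport-density regularity of \cite{DPP2002,San}; once they are in hand, (ii) and (iii) are comparatively soft consequences obtained by combining the sandwich $u_f\le u\le\uhl$ with the ray-wise rigidity of $\uhl$.
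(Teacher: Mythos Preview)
The paper does not actually prove Theorem~\ref{t:cm11}: it is stated as a summary of the results of \cite{CM11} (see the sentence immediately preceding the theorem), and no argument is given in the text. So there is no in-paper proof to compare against; your sketch is essentially an outline of what \cite{CM11} does, and as such it is reasonable and matches the transport-ray and transport-density machinery the paper alludes to.

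That said, there is a genuine circularity in your argument for the converse implication in (ii). To show that a solution $(u,v)$ of \eqref{f:Ps2} satisfies $u=\uhl$ on $\spt(f)$, you invoke Theorem~\ref{t:equiv}(ii) to pass to a solution of \eqref{f:Ps} and then apply Theorem~\ref{t:incls}(i). But in this paper the proof of Theorem~\ref{t:equiv}(ii) \emph{uses} Theorem~\ref{t:cm11}(ii); so your route would make the two theorems depend on each other. You need an independent way to get $u=\uhl$ on $\spt(f)$ from \eqref{f:Ps2} alone---for instance, by working directly with the weak formulation \eqref{f:weak} and the ray structure, which is in fact how \cite{CM11} proceeds. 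Once that is established without appealing to Theorem~\ref{t:equiv}, the rest of your argument (the sandwich via Theorem~\ref{t:uf}(ii), the ray-wise identification $v=v_f$, and the corollary (iii)) goes through as you describe.
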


\begin{proof}[Proof of Theorem~\ref{t:equiv}]
(i)  By Theorem~\ref{t:exidp} we already know that $\spt(\nu) \subseteq\Gamma_f$, hence
$\int_{\partial\Omega} \psi\, d\nu = 0$ for every $ \psi \in C^\infty_c (\R^N \setminus \Gamma_f)$,
so that~\eqref{f:weak} easily follows from~\eqref{f:weaknu}.
It remains to prove that $u=\phi$ on $\Gamma_f$.
Since $(u,v,\nu)$ is a solution to~\eqref{f:Ps}, $u$ satisfies
$\int_\Omega f(w-u)\,dx \leq 0$ for every $w\in\Xp$.
Choosing $w = \uhl$ we deduce that $u = \uhl$ on $\spt(f)$. 
Let $y\in\Gamma_f$. By definition~\eqref{f:Gf} of $\Gamma_f$, there exists $x\in\spt(f)$ such that
$\uhl(x) = u(x) = \phi(y) + |x-y|$,
hence
$u(y) \geq u(x) - |x-y| \geq \phi(y)$.
Since $u\leq \phi$ on $\partial\Omega$, we conclude that $u(y) = \phi(y)$.

(ii) Let $(u,v)\in\Xp[f]\times L^1_+(\Omega)$ satisfy~\eqref{f:weak} and $(1-|\nabla u|) v = 0$ a.e.\ in $\Omega$.
By Theorem~\ref{t:cm11}(ii) we have that $v = v_f$ and $u_f\leq u \leq \uhl$.
By Theorem~\ref{t:uf}(i) we also know that $u=\uhl$ on $\spt(f)$, hence
\[
\int_\Omega f (w - u)\, dx
= \int_{\spt(f)} f (w - \uhl)\, dx \leq 0,
\qquad
\forall w\in\Xp, 
\]
so that \eqref{f:var3} holds, and the conclusion follows
from Theorem~\ref{t:incls}(ii). 
\end{proof}

Comparing the information about evolution provided by Theorem \ref{t:exidp} 
with those about stationary solutions provided by Theorem \ref{t:cm11}, 
we notice that during the evolution, the dynamic of the standing layer is unique, 
while the rolling layer can assume different configurations, 
while for stationary configurations, the standing layer can vary 
(retaining memory of the initial configuration of the pile), while the rolling layer remains fixed.

\def\cprime{$'$}
\providecommand{\bysame}{\leavevmode\hbox to3em{\hrulefill}\thinspace}
\providecommand{\MR}{\relax\ifhmode\unskip\space\fi MR }
\providecommand{\MRhref}[2]{%
  \href{http://www.ams.org/mathscinet-getitem?mr=#1}{#2}
}
\providecommand{\href}[2]{#2}

\end{document}